\documentclass[11pt,a4paper]{article}
\usepackage[
left=25mm,
right=25mm,
top=25mm,
bottom=25mm
]{geometry}

\usepackage[T1]{fontenc}
\usepackage{lmodern}
\usepackage{amsmath,amssymb,amsthm,mathtools}
\usepackage{mathrsfs,amsfonts}

\usepackage{microtype}
\usepackage{enumitem}
\usepackage{xcolor}

\usepackage[
colorlinks=true,
linkcolor=blue,
citecolor=blue,
urlcolor=blue
]{hyperref}

\theoremstyle{plain}
\newtheorem{theorem}{Theorem}[section]
\newtheorem{lemma}[theorem]{Lemma}
\newtheorem{proposition}[theorem]{Proposition}

\theoremstyle{definition}

\theoremstyle{remark}

\newcommand{\T}{\mathcal T}
\newcommand{\HH}{\mathbb H}
\newcommand{\Rea}{\operatorname{Re}}

\allowdisplaybreaks[2]
\title{Weak Hurwitz stability of extended Tur\'an expressions
	for Stirling--Whitney--Riordan and Laguerre polynomials}

\author{
	Wenle Shi\thanks{
		E-mail address: \texttt{shi-wenle@hotmail.com, shiwenle@mail.dlut.edu.cn}.
	}\\[5pt]
	\small School of Mathematical Sciences, Dalian University of Technology, \\
	\small Dalian 116024, P. R. China\\[5pt]
}

\date{}

\begin{document}
	
	\maketitle
	
	\begin{abstract}
In this paper, we establish a sufficient condition for weak
Hurwitz stability of extended Tur\'an expressions for
the Stirling--Whitney--Riordan polynomials.
The proof uses polynomial transformations that preserve
zeros in a real interval and in the unit disk.
As an application, we prove that the extended Tur\'an
expressions for the Bell polynomials are weakly Hurwitz stable.
We also prove the corresponding result for the reflected
Laguerre polynomials using polynomial approximations.
These results settle two conjectures of Chasse, Grabarek,
and Visontai.
	\end{abstract}
	
	\begin{quote}
		\small
		\noindent
		\textbf{2020 Mathematics Subject Classification.}
		05A15, 26C10, 05A20.
		
		\medskip
		\noindent
		\textbf{Keywords.}
		Tur\'an expression, weak Hurwitz stability,
		Stirling--Whitney--Riordan polynomial, Laguerre polynomial
	\end{quote}

\section{Introduction}

Let $F=(F_j(x))_{j\geq0}$ be a sequence of real polynomials.
For integers $m\geq0$ and $n\geq1$, the \emph{extended 
	Tur\'an expression} associated with $F$ is defined by~\cite{CGV} 
\begin{align}\label{eq:turan}
	\T_{m,n}(F;x)
	=\sum_{j=0}^{2n}
	\frac{(-1)^{n+j}}{j!(2n-j)!}
	F_{m+j}(x)F_{m+2n-j}(x).
\end{align}
When $n=1$, the expression in \eqref{eq:turan} reduces
to the ordinary Tur\'an expression
\begin{align*}
	\T_{m,1}(F;x)
	=F_{m+1}(x)^2-F_m(x)F_{m+2}(x).
\end{align*}
Write $\HH=\{x\in\mathbb C:\Rea x>0\}$. A nonzero polynomial
is called \emph{weakly Hurwitz stable} if it has no zeros in $\HH$.

Tur\'an expressions for polynomial sequences and their
weak Hurwitz stability have been widely studied. Craven and Csordas~\cite{CC} studied the connections between
Jensen polynomials and the Tur\'an and Laguerre inequalities.
Borcea and Br\"and\'en~\cite{BB} developed a theory of linear
differential operators preserving polynomial stability.
For combinatorial polynomials, Liu and Wang~\cite{LWreal}
gave criteria for real zeros and interlacing, while Liu and
Zhu~\cite{LZ} studied strong $q$-log-convexity.
Gao and Liu~\cite{GaoLiu} investigated weak Hurwitz stability
of strong Tur\'an expressions for Eulerian and Narayana
polynomials.
Chasse, Grabarek, and Visontai~\cite{CGV} proved weak Hurwitz
stability of ordinary Tur\'an expressions for several
polynomial sequences.
For further results, see~\cite{LWconvex,LY}.

Recently, Zhu~\cite[Theorem~3.2]{ZSWR} proved weak Hurwitz
stability of ordinary Tur\'an expressions for the
Stirling--Whitney--Riordan polynomials under a strict
balance condition.
Subsequently, Ding and Zhu~\cite[Theorem~2.2]{DZhurwitz}
established a general recurrence criterion for Hurwitz
stability of ordinary Tur\'an expressions.
Guo and Liu~\cite{GuoLiu} studied asymptotic normality
of the Stirling--Whitney--Riordan triangle.
In this paper, we prove weak Hurwitz stability of
the extended Tur\'an expressions 
for the Stirling--Whitney--Riordan polynomials.

Following~\cite[Eq.~(3.2)]{ZSWR}, for $j\geq0$, we define
the Stirling--Whitney--Riordan polynomials by
\begin{align}\label{eq:swr}
	S_{j+1}(x)
	=\bigl[a_2+(b_1+b_2)(x+\lambda)\bigr]S_j(x)
	+(x+\lambda)\bigl[a_1+b_1(x+\lambda)\bigr]
	\frac{\mathrm dS_j(x)}{\mathrm dx},
\end{align}
where $S_0(x)=1$.
The Stirling--Whitney--Riordan polynomials include
the following polynomial sequences as special cases
\cite{ZSWR}.
\begin{itemize}
	\item If $a_1=b_2=1$ and $a_2=b_1=\lambda=0$,
	then $S_j(x)$ is the Bell polynomial.
	
	\item If $a_1=b_1=1$ and $a_2=b_2=\lambda=0$,
	then $S_j(x)$ is the geometric polynomial.
	
	\item If $a_1=r$, $a_2=b_2=1$, and $b_1=\lambda=0$,
	then $S_j(x)$ is the Dowling polynomial.
	
	\item If $a_1=r$, $a_2=b_1=1$, and $b_2=\lambda=0$,
	then $S_j(x)$ is the Tanny--geometric polynomial.
	
	\item If $a_1=b_2=\lambda=1$ and $a_2=b_1=0$,
	then $S_j(x)$ is the row-generating polynomial
	of the Riordan triangle.
\end{itemize}

\begin{theorem}\label{thm:swr}
Suppose that
\begin{align}\label{eq:parameters}
a_1,a_2,b_1,b_2,\lambda\geq0,\quad
a_1(b_1+b_2)\geq a_2b_1,\quad
b_1+b_2>0,\quad a_1+b_1>0.
\end{align}
For every $m\geq0$ and $n\geq1$, the polynomial
$\T_{m,n}(S;x)$ is nonzero and has no zeros in
$\Rea x>-\lambda$. In particular, it is weakly Hurwitz stable.
\end{theorem}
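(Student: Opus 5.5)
Throughout I will work in the shifted variable $y=x+\lambda$. Then \eqref{eq:swr} becomes
\[
S_{j+1}=\bigl[a_2+(b_1+b_2)y\bigr]S_j+y(a_1+b_1y)\,\frac{\mathrm dS_j}{\mathrm dy},
\]
with all parameters nonnegative. The claim is that $\T_{m,n}(S)$, viewed as a polynomial in $y$, is nonzero and has no zeros with $\Rea y>0$. The plan is to pass from $\T_{m,n}$ to a generating-function identity, and then realize $\T_{m,n}$ as the image of an explicitly understood polynomial under zero-preserving linear transformations.

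\textbf{Step 1 (generating function).} Put $f(t)=\sum_{j\ge0}S_j\,t^j/j!$. The recurrence says that $f$ solves the first-order linear PDE
\[
\partial_t f=\bigl[a_2+(b_1+b_2)y\bigr]f+y(a_1+b_1y)\,\partial_y f .
\]
I will solve it by characteristics: the flow $\dot y=y(a_1+b_1y)$ is explicit, and $f$ is an exponential of the integral of $a_2+(b_1+b_2)y$ along this flow. For the Bell case this gives $f=e^{y(e^t-1)}$; in general $f$ is a product of a power of a Möbius-type factor and an exponential.

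Next, the binomial identity
\[
\sum_j\binom{2n}{j}(-1)^j g^{(j)}g^{(2n-j)}=\frac{\mathrm d^{2n}}{\mathrm ds^{2n}}\bigl[g(s)g(-s)\bigr]_{s=0}
\]
gives
\[
\T_{m,n}(S)=(-1)^n\,[s^{2n}]\;\partial_t^m f(s)\,\partial_t^m f(-s).
\]
So $\T_{m,n}$ is an explicit coefficient of a product of two conjugate-symmetric factors.

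\textbf{Step 2 (reduction to a model polynomial).} Using the explicit form of $f$, I will expand $\T_{m,n}$ in a convenient basis, for instance powers of $y$ weighted by $a_1+b_1y$. The aim is to write
\[
\T_{m,n}(S)=\Phi\bigl(P_{m,n}\bigr),
\]
where $P_{m,n}$ is a simple model polynomial whose zeros I can locate exactly, either all real in a bounded interval or all in the closed unit disk. The transformation $\Phi$ should be a composition of the following operations: multiplier sequences; Schur--Szegő composition with a polynomial having zeros in a known interval; and a Möbius change of variable $y\mapsto y/(a_1+b_1y)$. This Möbius map sends the right half-plane $\Rea y>0$ into a disk or half-plane, and it is here that the disk and the real interval enter.

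The hypothesis $a_1(b_1+b_2)\ge a_2b_1$ should be exactly what makes the exponent of the Möbius factor nonnegative, or what makes the resulting coefficient sequence a multiplier sequence. Positivity of $b_1+b_2$ and of $a_1+b_1$ should guarantee that the leading or constant coefficient survives, which gives nonvanishing.

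\textbf{Step 3 (stability).} I will invoke Grace--Walsh--Szegő / Schur--Szegő to transfer the zero location of $P_{m,n}$ through $\Phi$. Then I pull back through the Möbius map to conclude that there are no zeros with $\Rea y>0$. Boundary zeros on $\Rea y=0$ are allowed by the statement. Nonvanishing of $\T_{m,n}$ I will check separately: I expect a leading coefficient of the form $c\,(b_1+b_2)^{\cdot}(\cdots)>0$, obtained by comparing top degrees in the coefficient formula.

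\textbf{Main obstacle.} The hardest step is Step 2: identifying the right model polynomial $P_{m,n}$ and proving that its zeros lie in the interval or disk for every $m$ and $n$. My first attempt will be the Bell case $f=e^{y(e^t-1)}$, where $\partial_t^m f$ is $f$ times a Touchard-type polynomial in $ye^t$. There I will check by hand that $(-1)^n[s^{2n}]$ of the product is a Jensen-type transform of a polynomial with zeros in $[-1,1]$. After that, I will try to lift the same structure to the general case through the Möbius substitution.
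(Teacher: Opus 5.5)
Your Step 1 is sound and matches how the paper starts. With $u=1+2b_1(x+\lambda)/a_1$ and $S_j=(a_1/2)^jR_j(u)$, the $m$-th $t$-derivative of the generating function is $H(t,u)R_m(\phi(t,u))$, where $\phi(t,u)=(u-\tanh t)/(1-u\tanh t)$ is a M\"obius flow. Then $\T_{m,n}$ is $(-1)^n[t^{2n}]$ of the product of this expression at $t$ and $-t$.

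Steps 2 and 3, however, are not an argument. You never say what $P_{m,n}$ or $\Phi$ is, and you concede this is the main obstacle, yet it is the entire content of the theorem. It is also unclear that a fixed chain of multiplier sequences and Schur--Szeg\H{o} compositions applied to a fixed model polynomial can work at all. The reduction that does work is pointwise in $u$, because it is built from the zeros of $S_m$, and these enter through parameters that depend on $u$. Three ingredients are missing.

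\textbf{First}, $R_m$ has only real zeros, all in $[-1,1]$; equivalently, the zeros of $S_m$ lie in $[-\lambda-a_1/b_1,-\lambda]$. This follows from the weighted Gauss--Lucas theorem applied to the recurrence, with weights $a_2/a_1$ at $1$ and $(b_1+b_2)/b_1-a_2/a_1$ at $-1$. This is where $a_1(b_1+b_2)\ge a_2b_1$ really enters; in the paired factor $H(t,u)H(-t,u)$ the parameter $a_2$ cancels.

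\textbf{Second}, you need the root-by-root pairing identity
\[
\frac{(\phi(t,u)-r)(\phi(-t,u)-r)}{(u-r)^2}=\frac{1-\eta\tanh^2 t}{1-u^2\tanh^2 t},\qquad \eta=\Bigl(\frac{1-ru}{u-r}\Bigr)^2,
\]
where $|\eta|\le1$ whenever $r\in[-1,1]$ and $|u|>1$. Applied to every zero $r_j$ of $R_m$, it gives
\[
\T_{m,n}(R;u)=(-1)^nR_m(u)^2\,[t^{2n}]\frac{(1-\tanh^2 t)^d\prod_{j=1}^m(1-\eta_j\tanh^2 t)}{(1-u^2\tanh^2 t)^{d+m}},\qquad d=\frac{b_1+b_2}{b_1}.
\]

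\textbf{Third}, you need a proof that this coefficient, viewed as a polynomial in $z=u^2$ with the $\eta_j$ frozen, has all its zeros in $|z|\le1$ for arbitrary $|\eta_j|\le1$. The paper proves this by induction on the number of $\eta$'s, using $G_{\ell+1}=G_\ell+\frac{z-\eta_{\ell+1}}{d+\ell}G_\ell'$ and weighted Gauss--Lucas. The base case reduces to a second-order differential recurrence for $[t^{2n}](1-z\sinh^2 t)^{-d}$ whose zeros stay in $[-1,0]$. Real-rootedness there is taken from Bates--Yoshida, and the interval is secured by a sign argument.

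With these three steps, $\Rea x>-\lambda$ gives $\Rea u>1$, hence $|u^2|>1$, and nonvanishing follows, in fact on the larger region $|u|>1$. Nothing in your proposal supplies these steps or a substitute for them.

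Your plan also treats the parameter range as if one M\"obius picture covered it, which fails at the boundary. For $b_1=0$, which contains the Bell case you intend to do first, the flow is $\dot y=a_1y$ and the generating function is $\exp\bigl(a_2t+(b_2/a_1)y(e^{a_1t}-1)\bigr)$. There is no M\"obius structure, and $\deg\T_{m,n}$ drops from $2m+2n$ to $2m+n$. For $a_1=0$ the hypothesis forces $a_2=0$, and $S_j$ is a constant multiple of $(x+\lambda)^j$.

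The paper settles $a_1=0$ by an explicit formula. It handles $b_1=0$ by replacing $b_1$ with $\varepsilon\to0^+$ and applying Hurwitz's theorem, which requires the limit to be nonzero. That is checked via $[x^{2m+n}]\T_{m,n}(S;x)=(-1)^na_1^nb_2^{2m+n}/n!$. So in the paper's argument the Bell case is reached only as a limit, and it is not the natural place to discover the structure. The leading-coefficient check you anticipate also has to be done separately in the two regimes.
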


Chasse, Grabarek, and Visontai~\cite[Conjecture~4.3]{CGV}
conjectured that the extended Tur\'an expressions
for the Bell polynomials are weakly Hurwitz stable.
For $a_1=b_2=1$ and $a_2=b_1=\lambda=0$, the polynomials
$S_j(x)$ are the Bell polynomials.
Therefore, Theorem~\ref{thm:swr} proves their conjecture.

For the Laguerre polynomials $L_j(x)$, set
\begin{align}
	Q_j(x)=L_j(-x)
	=\sum_{r=0}^{j}\binom{j}{r}\frac{x^r}{r!},
	\qquad j\geq0.
	\label{eq:laguerre}
\end{align}
Since $\T_{m,n}(Q;x)=\T_{m,n}(L;-x)$,
the following result proves the conjecture of
Chasse, Grabarek, and Visontai~\cite[Conjecture~4.4]{CGV}.

\begin{theorem}\label{thm:laguerre}
For every $m\geq0$ and $n\geq1$, the polynomial
$\T_{m,n}(Q;x)$ is weakly Hurwitz stable.
\end{theorem}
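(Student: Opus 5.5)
Throughout, $Q_j(x)=\sum_r\binom{j}{r}x^r/r!$, and the aim is Theorem~\ref{thm:laguerre}. The abstract says this case is handled "using polynomial approximations", which suggests reducing the Laguerre case to a limit of Stirling--Whitney--Riordan-type polynomials covered by Theorem~\ref{thm:swr}, and then applying Hurwitz's theorem on limits of zero-free analytic functions.

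\textbf{Step 1: a generating identity.} First, rewrite $\T_{m,n}(F;x)$ as a constant term. Put $G(t)=\sum_j F_j(x)t^j/j!$. Then $\T_{m,n}$ is, up to normalization, the coefficient extraction
\begin{align*}
\frac{1}{(2n)!}\,\bigl[(D_s-D_t)^{2n}\bigr] G^{(m)}(s)\,G^{(m)}(t)\Big|_{s=t=0},
\end{align*}
since $\sum_j \frac{(-1)^j}{j!(2n-j)!}D_s^jD_t^{2n-j}=\frac{1}{(2n)!}(D_t-D_s)^{2n}$. For Laguerre polynomials the exponential-type generating function is explicit: $\sum_j Q_j(x)t^j/j!=e^t I(xt)$-type Bessel expressions, namely $e^{t}\,{}_0F_1(;1;xt)$. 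This structure lets one compare with Bell-type sequences.

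\textbf{Step 2: realize $Q_j$ as a limit.} I would look for parameters $(a_1,a_2,b_1,b_2,\lambda)=(a_1^{(N)},\dots)$ satisfying \eqref{eq:parameters}, together with affine rescalings $x\mapsto \alpha_N x$ and normalizations $S_j\mapsto c_N^{j}S_j$, such that the rescaled $S^{(N)}_j(x)\to Q_j(x)$ coefficientwise for each fixed $j$. The recurrence \eqref{eq:swr} produces $j!$-type weights through the derivative term. The factor $1/r!$ in $Q_j$ is less natural, so a plausible candidate is a Dowling/Tanny-type choice with $a_1=r\to\infty$, $\lambda=0$. If no exact fit exists, the fallback is a different approximation: $Q_j(x)=\lim_{N}\sum_r\binom{j}{r}\frac{(N)_r}{N^r}\frac{x^r}{r!}\cdots$, that is, approximate $Q_j$ by Jacobi/Meixner-type polynomials whose Tur\'an expressions are handled by the same interval-preserving and disk-preserving transformation argument used for Theorem~\ref{thm:swr}. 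In either case the key property is that each approximant $\T_{m,n}(S^{(N)};\cdot)$ is zero-free on $\HH$. Because the approximants are affine rescalings with $\alpha_N>0$, that property survives the substitution $x\mapsto\alpha_N x$.

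\textbf{Step 3: Hurwitz's theorem.} For fixed $m,n$, $\T_{m,n}$ is a finite polynomial expression in $F_m,\dots,F_{m+2n}$. Hence $\T_{m,n}(S^{(N)})\to\T_{m,n}(Q)$ uniformly on compacts. By Hurwitz's theorem the limit either has no zeros in $\HH$ or is identically zero. To rule out the second option, I would check nonvanishing directly. At $x=0$ we have $Q_j(0)=1$, so the value is $\sum_j (-1)^{n+j}/(j!(2n-j)!)=0$, which does not help. Instead, compute the leading coefficient: $Q_j$ has leading term $x^j/j!$, so the top coefficient of $\T_{m,n}$ in degree $2m+2n$ is
\begin{align*}
\sum_j\frac{(-1)^{n+j}}{j!(2n-j)!\,(m+j)!\,(m+2n-j)!},
\end{align*}
which is a known nonzero (positive) Tur\'an-type sum for the sequence $1/k!$. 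Alternatively, one can show positivity at $x>0$ large.

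\textbf{Main obstacle.} The hard step is Step 2: finding an approximating family inside the region \eqref{eq:parameters}, or one with a proof mirroring Theorem~\ref{thm:swr}, that converges to $Q_j$. I would first try writing $Q_j=\lim_{N\to\infty} N^{-j}\,\cdot$ a Dowling-type polynomial at rescaled argument, and verify the limit on the level of the recurrence. The recurrence for $Q_j$ is $(j+1)Q_{j+1}=(2j+1+x)Q_j-jQ_{j-1}$, and I would compare it with \eqref{eq:swr} in a scaling limit.
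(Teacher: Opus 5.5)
\textbf{There is a genuine gap, in Step~2.} Steps~1 and~3 are routine once zero-free approximants exist, so Step~2 carries the whole content of the theorem. You leave it open, and your primary plan provably cannot work.

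By \eqref{eq:swr}, $[x^{j+1}]S_{j+1}=(b_2+(j+1)b_1)[x^j]S_j$. Hence $[x^2]S_2\cdot S_0=\kappa\,([x^1]S_1)^2$ with $\kappa=\frac{2b_1+b_2}{b_1+b_2}\in[1,2]$. This relation survives every substitution $x\mapsto\alpha x+\beta$ and every normalization $S_j\mapsto\mu\nu^jS_j$. For the target, $[x^2]Q_2\cdot Q_0=\frac12([x^1]Q_1)^2$. So no admissible parameter sequence, however rescaled, converges coefficientwise even to $(Q_0,Q_1,Q_2)$.

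Conceptually, for $a_1,b_1>0$ the exponential generating function of \eqref{eq:swr} is $e^{a_2t}\bigl(1-\frac{b_1}{a_1}(x+\lambda)(e^{a_1t}-1)\bigr)^{-(b_1+b_2)/b_1}$, which is of binomial type in $x$. By contrast, $\sum_jQ_j(x)t^j/j!=e^t\sum_r(xt)^r/(r!)^2$ is of Bessel type.

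The Jacobi/Meixner fallback is only a name. The proof of Theorem~\ref{thm:swr} rests on the Riccati flow $\partial_t\phi=\phi^2-1$ and the resulting $\tanh^2$ kernel of Lemma~\ref{lem:coefficient}. You exhibit no family with comparable structure that converges to $Q_j$.

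\textbf{The missing idea.} The ``polynomial approximations'' should approximate the generating function from your Step~1 in the variable $t$. The approximants should be polynomials whose $t$-zeros you can locate for each fixed $x\in\HH$.

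The paper takes $F_k(t,x)=(1+t/k)^kQ_k(xt/k)$, which tends to $e^t\sum_r(xt)^r/(r!)^2$. The zeros of $Q_k$ are negative. So for $x=|x|e^{\mathrm i\theta}\in\HH$, all $t$-zeros of $F_k$ lie on the rays of arguments $\pi$ and $\pi-\theta$. They therefore lie in a sector of opening $|\theta|<\pi/2$ inside the open left half-plane. By Gauss--Lucas, so do the zeros of $p=\partial_t^mF_k$.

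Then $(-1)^n[t^{2n}]p(t)p(-t)=p(0)^2e_n(r_1^{-2},\dots,r_h^{-2})$, where $e_n$ is the $n$th elementary symmetric function and $r_1,\dots,r_h$ are the zeros of $p$. After a common rotation, all $r_j^{-2}$ have positive real part, so this quantity is nonzero (Lemma~\ref{lem:sector}). The resulting polynomials are zero-free on $\HH$ and converge to $\T_{m,n}(Q;x)$, and Hurwitz's theorem finishes the proof.

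\textbf{On your nonvanishing check.} Using the coefficient of $x^{2m+2n}$ is correct, since that sum is positive. However, you assert this rather than prove it. It needs an argument, for example that $\sum_kt^k/(k!)^2$ and its derivatives have only negative zeros. The paper instead uses the elementary value $[x^{2n}]\T_{m,n}(Q;x)=\frac{1}{(n!)^2(2n)!}$.
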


We use the following forms of Hurwitz's theorem and the
Gauss--Lucas theorem.

\begin{theorem}[Hurwitz's theorem {\cite[p.~176]{Ahlfors}}]
\label{thm:hurwitz}
Let $\Omega\subseteq\mathbb C$ be a connected open set, and let
$f_k(z)$ be holomorphic and nonvanishing on $\Omega$ for every
$k\geq1$. If $f_k(z)$ converges to $f(z)$ uniformly on every
compact subset of $\Omega$, then $f(z)$ is either identically
zero or nonvanishing on $\Omega$.
\end{theorem}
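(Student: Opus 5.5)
Since Hurwitz's theorem is a classical result quoted from~\cite{Ahlfors}, the paper will just invoke it. For completeness, here is how I would prove it: by contradiction, combining the identity theorem with Rouch\'e's theorem on a small circle around a putative zero of the limit.

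\emph{Setup.} First note that $f$ is holomorphic on $\Omega$. It is a locally uniform limit of holomorphic functions, so this follows from Weierstrass's theorem, or from Morera's theorem applied to triangles in small discs. Now assume $f$ is not identically zero, and suppose $f(z_0)=0$ for some $z_0\in\Omega$. We derive a contradiction.

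\emph{Choosing the circle.} Since $\Omega$ is connected and $f\not\equiv0$, the identity theorem says the zeros of $f$ are isolated. Choose $r>0$ such that the closed disc $\overline{D}=\{|z-z_0|\le r\}$ lies in $\Omega$ and $f$ has no zeros on $\overline{D}\setminus\{z_0\}$. The circle $C=\{|z-z_0|=r\}$ is compact and $f$ does not vanish on it. Hence
\[
\delta=\min_{z\in C}|f(z)|>0 .
\]

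\emph{Applying Rouch\'e.} By uniform convergence on the compact set $C$, there is $k$ with $|f_k(z)-f(z)|<\delta\le|f(z)|$ for all $z\in C$. Rouch\'e's theorem then gives that $f_k$ and $f$ have the same number of zeros in the open disc $|z-z_0|<r$, counted with multiplicity. Since $f(z_0)=0$, this number is at least one. So $f_k$ vanishes somewhere in $\Omega$, contradicting the hypothesis that every $f_k$ is nonvanishing on $\Omega$.

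\emph{Alternative route.} One can avoid Rouch\'e by using the argument principle directly. By Weierstrass's theorem $f_k'\to f'$ uniformly on $C$, and $|f_k|\ge\delta/2$ on $C$ for large $k$. Therefore
\[
\frac{1}{2\pi i}\oint_C\frac{f_k'}{f_k}\,dz\;\longrightarrow\;\frac{1}{2\pi i}\oint_C\frac{f'}{f}\,dz .
\]
The left side equals $0$ for every $k$, while the right side is a positive integer. This is the same contradiction.

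\emph{Main difficulty.} The only delicate point is choosing $r$ so that $f$ has no zeros on $C$. This is exactly where connectedness of $\Omega$ and the assumption $f\not\equiv0$ are used, through the identity theorem. Everything after that is routine.
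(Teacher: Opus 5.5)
Your proof is correct. The paper gives no proof of this statement and only cites Ahlfors; Ahlfors's argument is exactly your ``alternative route,'' in which $f_k'/f_k\to f'/f$ uniformly on a small circle around an isolated zero $z_0$, and the zero-counting integrals (all $0$ on the left, a positive integer on the right) give the contradiction. Your main Rouch\'e version is an equivalent packaging of that same argument-principle idea, and both use connectedness and $f\not\equiv0$ in the same place, through the identity theorem, to choose the circle.
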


\begin{theorem}[Gauss--Lucas theorem
{\cite[Theorem~2.1.1]{RS}}]\label{thm:gausslucas}
Let $p(z)$ be a nonconstant complex polynomial. Every zero
of $\frac{\mathrm d p(z)}{\mathrm dz}$ lies in the convex hull
of the zeros of $p(z)$. Consequently, the same conclusion holds
for every higher derivative
$\frac{\mathrm d^r p(z)}{\mathrm dz^r}$ with
$1\leq r\leq\deg p(z)$.
\end{theorem}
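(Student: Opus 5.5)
The plan is the classical partial-fraction argument for the first derivative, followed by an induction on $r$ for higher derivatives.

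Let $d=\deg p\geq1$ and factor
\begin{align*}
p(z)=c\prod_{k=1}^{d}(z-z_k),\qquad c\neq0,
\end{align*}
where $z_1,\dots,z_d$ are the zeros of $p$ listed with multiplicity. Let $K$ denote their convex hull, and let $w$ be a zero of $p'$.

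If $p(w)=0$, then $w$ is some $z_k$, so $w\in K$ trivially. Otherwise $p(w)\neq0$, and logarithmic differentiation gives
\begin{align*}
0=\frac{p'(w)}{p(w)}=\sum_{k=1}^{d}\frac{1}{w-z_k}.
\end{align*}
Taking complex conjugates and using $1/\overline{u}=u/|u|^2$, this becomes
\begin{align*}
\sum_{k=1}^{d}\frac{w-z_k}{|w-z_k|^2}=0.
\end{align*}
Set $t_k=|w-z_k|^{-2}>0$. Solving for $w$ yields
\begin{align*}
w=\frac{\sum_k t_k z_k}{\sum_k t_k},
\end{align*}
which is a convex combination of the $z_k$ with strictly positive weights. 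Hence $w\in K$.

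For higher derivatives, induct on $r$ with $1\leq r\leq d$, writing $K_r$ for the convex hull of the zeros of $p^{(r)}$ and $K_0=K$. The claim is that every zero of $p^{(r)}$ lies in $K$.

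When $r<d$, the polynomial $p^{(r-1)}$ has degree $d-r+1\geq2$, so it is nonconstant. Applying the first step to it shows that every zero of $p^{(r)}$ lies in $K_{r-1}$. By the induction hypothesis, $K_{r-1}\subseteq K$, since $K$ is convex and contains all zeros of $p^{(r-1)}$. Hence every zero of $p^{(r)}$ lies in $K$.

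The edge case $r=d$ needs separate mention: $p^{(d)}=d!\,c$ is a nonzero constant, so it has no zeros and the statement holds vacuously. I expect no real obstacle in this argument. The only points needing care are the case $p(w)=0$, where the logarithmic derivative is undefined, and the degenerate constant derivative at $r=\deg p$.
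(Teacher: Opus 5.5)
Your proof is correct: it is the standard partial-fraction argument, followed by the routine induction through convex hulls. The paper gives no proof of its own here and simply imports the result from Rahman and Schmeisser (Theorem~2.1.1), where essentially this argument appears; moreover, your first step is the unit-weight case $\rho_1=\cdots=\rho_d=1$ of the paper's Lemma~\ref{lem:convex}.
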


Throughout, $[t^j]$ denotes the coefficient of $t^j$
in the Taylor expansion at $t=0$.
An empty product equals $1$.
For $\alpha>0$, we write $(\alpha)_0=1$ and
$(\alpha)_j=\alpha(\alpha+1)\cdots(\alpha+j-1)$
for $j\geq1$.

The paper is organized as follows. Section~\ref{sec:swr}
proves Theorem~\ref{thm:swr}. We first establish the
interval and disk lemmas, derive a generating-function
identity for the normalized recurrence, and then treat
the boundary parameters. Section~\ref{sec:laguerre}
proves Theorem~\ref{thm:laguerre} by a sector lemma and
finite polynomial approximation.

\section{Proof of Theorem~\ref{thm:swr}}\label{sec:swr}

\subsection{Location of zeros}

The following weighted form of the Gauss--Lucas theorem follows
from \cite[Theorem~(6,1)$'$, p.~22]{Marden}, after omitting
terms with zero weights and combining terms with the same pole.

\begin{lemma}[{\cite[Theorem~(6,1)$'$, p.~22]{Marden}}]
	\label{lem:convex}
	Let $r_1,\ldots,r_k\in\mathbb C$, and suppose that
	$\rho_1,\ldots,\rho_k$ are nonnegative real numbers satisfying
	$\sum_{j=1}^{k}\rho_j>0$.
	If $z\in\mathbb C\setminus\{r_1,\ldots,r_k\}$ satisfies
	\begin{align}
		\sum_{j=1}^{k}\frac{\rho_j}{z-r_j}=0,
		\notag
	\end{align}
	then $z$ belongs to the convex hull of
	$\{r_j:\rho_j>0\}$.
\end{lemma}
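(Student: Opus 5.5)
The plan is the standard separating-line argument, with the zero weights handled by a reduction. Suppose $z\notin\{r_1,\ldots,r_k\}$ satisfies $\sum_j \rho_j/(z-r_j)=0$, and set $R=\{r_j:\rho_j>0\}$.

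First, discard every index with $\rho_j=0$; this does not change the sum. Then merge indices with the same point $r_j$ by adding their weights. After these two steps we may assume the $r_j$ are distinct and every $\rho_j>0$. The hypothesis $\sum_j\rho_j>0$ guarantees that at least one term survives.

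Now argue by contradiction. Suppose $z$ lies outside the convex hull $K$ of $R$. Since $K$ is a compact convex set, the separating hyperplane theorem in $\mathbb R^2\cong\mathbb C$ gives a unit complex number $u$ such that
\begin{align*}
\Rea\bigl(\bar u(z-r_j)\bigr)>0 \quad\text{for every } r_j\in R.
\end{align*}
Concretely, one can take $u$ to be the direction from the nearest point of $K$ to $z$.

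Next rewrite each term as
\begin{align*}
\frac{1}{z-r_j}=\frac{\overline{z-r_j}}{|z-r_j|^2}.
\end{align*}
Multiplying by $u$ and taking real parts gives
\begin{align*}
\Rea\frac{u}{z-r_j}=\frac{\Rea\bigl(u\,\overline{(z-r_j)}\bigr)}{|z-r_j|^2}=\frac{\Rea\bigl(\bar u(z-r_j)\bigr)}{|z-r_j|^2}>0.
\end{align*}
Since all surviving weights are positive and there is at least one term, we get
\begin{align*}
\Rea\Bigl(u\sum_j\frac{\rho_j}{z-r_j}\Bigr)>0,
\end{align*}
which contradicts the assumption that the sum vanishes. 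Hence $z\in K$.

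Equivalently, conjugating the equation and multiplying out shows that $z$ is a convex combination of the $r_j$ with coefficients proportional to $\rho_j/|z-r_j|^2$. This is exactly Marden's statement, so one may simply cite \cite[Theorem~(6,1)$'$]{Marden} after the reduction.

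There is no serious obstacle here. The only care needed is the bookkeeping of zero weights: the convex hull must be taken over $\{r_j:\rho_j>0\}$ and not over all the $r_j$. The fact that $z$ avoids all poles, including those with zero weight, is only needed for the expression to be defined.
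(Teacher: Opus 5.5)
Your proof is correct. It follows the paper's route and then goes one step further. The paper gives no argument of its own: it performs exactly your reduction (omit terms with $\rho_j=0$, combine terms with the same pole) and then cites Marden's Theorem~(6,1)$'$. Your separating-line argument, or the one-line version you mention, replaces that citation with a self-contained proof. In the one-line version, conjugating the equation gives $z=\sum_j\mu_jr_j\big/\sum_j\mu_j$ with $\mu_j=\rho_j/|z-r_j|^2$.

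This direct argument also shows that the reduction is not needed. In the explicit convex combination, terms with zero weight get coefficient $\mu_j=0$ automatically, and $\mu_j>0$ exactly when $\rho_j>0$. Equal poles are harmless, and $\sum_j\mu_j>0$ follows from $\sum_j\rho_j>0$. So the conclusion about the hull of $\{r_j:\rho_j>0\}$ comes out directly.

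The citation route is shorter on the page, but it relies on the reader checking that Marden's hypotheses (distinct points, positive masses) match after the reduction. Your version avoids that dependence entirely.
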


\begin{lemma}\label{lem:interval}
	Let $p(z)$ be a nonzero real polynomial of degree $k\geq0$
	with all its zeros in $[-1,0]$. For $\ell>0$, define
	\begin{align}
		P_\ell(z)
		&=4z(1+z)\frac{\mathrm d^2p(z)}{\mathrm dz^2}
		+[4+(4\ell+6)z]\frac{\mathrm dp(z)}{\mathrm dz}
		+2\ell p(z).
		\label{eq:Pd}
	\end{align}
	Then $P_\ell(z)$ has degree $k$, and all its zeros lie
	in $[-1,0]$.
\end{lemma}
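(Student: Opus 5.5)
We verify the degree directly, then locate all $k$ zeros of $P_\ell$ in $[-1,0]$ by counting sign changes on the real line, with perturbation and continuity of zeros handling degenerate configurations.

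\emph{Degree.} If $p(z)=cz^k+\cdots$ with $c\neq 0$, the coefficient of $z^k$ in \eqref{eq:Pd} is
\[
c\bigl[4k(k-1)+(4\ell+6)k+2\ell\bigr]=c\,(4k^2+4\ell k+2k+2\ell).
\]
This is nonzero because $\ell>0$. No term of $P_\ell$ has degree above $k$, so $\deg P_\ell=k$. For $k=0$ we have $P_\ell=2\ell p$, a nonzero constant, and there is nothing more to prove. Assume $k\ge 1$.

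\emph{Generic case.} Assume first that the zeros $r_1<\cdots<r_k$ of $p$ are simple and lie in the open interval $(-1,0)$. By Rolle's theorem, $p'$ has simple zeros $s_1<\cdots<s_{k-1}$ with $r_i<s_i<r_{i+1}$. The key evaluation is
\[
P_\ell(s_i)=4s_i(1+s_i)\,p''(s_i)+2\ell\,p(s_i).
\]
At a simple critical point $s_i$ between two consecutive simple zeros, $p$ has a local extremum, so $p''(s_i)$ and $p(s_i)$ are nonzero with opposite signs. Also $s_i(1+s_i)<0$. Hence both terms have the sign of $p(s_i)$, and so $\operatorname{sign}P_\ell(s_i)=\operatorname{sign}p(s_i)=(-1)^{k-i}\operatorname{sign}c$. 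These signs alternate, which gives at least one zero of $P_\ell$ in each interval $(s_i,s_{i+1})$, that is, $k-2$ zeros.

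\emph{Endpoints.} At $z=0$ we have $P_\ell(0)=4p'(0)+2\ell p(0)$. Since $0$ lies to the right of all zeros, $p(0)$ and $p'(0)$ both have the sign of $c$. Hence $P_\ell(0)$ has sign $\operatorname{sign}c$, opposite to $\operatorname{sign}P_\ell(s_{k-1})$, which gives a zero in $(s_{k-1},0)$.

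At $z=-1$ we have $P_\ell(-1)=-(4\ell+2)p'(-1)+2\ell p(-1)$. Here $p(-1)$ has sign $(-1)^k\operatorname{sign}c$ and $p'(-1)$ has sign $(-1)^{k-1}\operatorname{sign}c$, so both terms have sign $(-1)^k\operatorname{sign}c$. This is opposite to $\operatorname{sign}P_\ell(s_1)=(-1)^{k-1}\operatorname{sign}c$, which gives a zero in $(-1,s_1)$.

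For $k=1$ there are no $s_i$, and the two endpoint signs $-\operatorname{sign}c$ and $\operatorname{sign}c$ are already opposite. In all cases we obtain $k$ real zeros of $P_\ell$ in $(-1,0)$. Since $\deg P_\ell=k$, these are all of its zeros.

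\emph{Degenerate case.} A general $p$ with zeros in $[-1,0]$ is a coefficientwise limit of polynomials $p_\varepsilon$ of the same degree with simple zeros in $(-1,0)$; for instance, move the zeros slightly apart and inward. The map $p\mapsto P_\ell$ is linear, so $P_\ell[p_\varepsilon]\to P_\ell[p]$ coefficientwise. The degree stays equal to $k$ and the leading coefficient stays bounded away from $0$. Hence the zeros depend continuously on $\varepsilon$, and the zeros of the limit lie in the closed interval $[-1,0]$. Equivalently, one can apply Theorem~\ref{thm:hurwitz} on $\mathbb C\setminus[-1,0]$, using that the limit $P_\ell[p]$ is not identically zero.

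I expect the main obstacle to be the sign bookkeeping at the endpoints, which must hold for every $\ell>0$. It works here because the coefficient $4+(4\ell+6)z$ at $z=-1$ equals $-(4\ell+2)$, which is negative. That makes both terms of $P_\ell(-1)$ reinforce each other rather than compete.
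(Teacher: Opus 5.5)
Your proof is correct, but it obtains real-rootedness differently from the paper. The paper computes the same leading coefficient $2(2k+1)(k+\ell)a_k$. It then imports real-rootedness of $P_\ell$ from Bates and Yoshida (Theorem~2.7). Only afterwards does it exclude zeros on $(0,\infty)$ and $(-\infty,-1)$ by a sign argument: for $z>0$ every term of \eqref{eq:Pd} is positive, and for $z<-1$ every nonzero term has sign $(-1)^k$.

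You instead exhibit all $k$ zeros directly by the intermediate value theorem. You evaluate $P_\ell$ at the critical points $s_i$ of $p$, where the $p'$ term drops out, and $s_i(1+s_i)<0$ together with $p''(s_i)p(s_i)<0$ gives both remaining terms the sign of $p(s_i)$. You also evaluate at the endpoints $-1$ and $0$, where the terms likewise reinforce. Since $\deg P_\ell=k$, this yields real-rootedness and location at once. A perturbation, with Hurwitz's theorem on $\mathbb C\setminus[-1,0]$, then handles multiple zeros or zeros at the endpoints.

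Your route is self-contained and elementary. In the generic case it also gives more: simple zeros in the open interval, one in each of $(-1,s_1),(s_1,s_2),\ldots,(s_{k-1},0)$, so they interlace with the zeros of $p'$. The cost is the genericity reduction and the limit step. The paper's proof is shorter and needs no approximation. However, it relies on the external theorem for the essential real-rootedness step, which its sign analysis outside $[-1,0]$ cannot supply by itself.
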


\begin{proof}
	Without loss of generality, we may assume that the leading
	coefficient $a_k$ of $p(z)$ is positive. By \eqref{eq:Pd},
	we have
	\begin{align}
		[z^k]P_\ell(z)
		=2(2k+1)(k+\ell)a_k>0.
		\notag
	\end{align}
	Since the right-hand side of \eqref{eq:Pd} is a polynomial
	of degree at most $k$, it follows that $P_\ell(z)$ has
	degree $k$. By Bates and Yoshida
	\cite[Theorem~2.7]{BY}, all zeros of $P_\ell(z)$ are real.
	
	It remains to exclude zeros in $(-\infty,-1)\cup(0,\infty)$.
	Since $p(z)$ has positive leading coefficient and all its
	zeros lie in $[-1,0]$, we have
	\begin{align}
		\frac{\mathrm d^j p(z)}{\mathrm dz^j}>0
		\qquad (z>0,\ 0\leq j\leq k).
		\notag
	\end{align}
	Thus, for $z>0$, the right-hand side of \eqref{eq:Pd}
	is positive, and hence $P_\ell(z)>0$.
	
	For $z<-1$, we have
	\begin{align}
		(-1)^{k-j}\frac{\mathrm d^j p(z)}{\mathrm dz^j}>0
		\qquad (0\leq j\leq k).
		\notag
	\end{align}
	Moreover, 
	\begin{align}
		4z(1+z)>0,\qquad
		4+(4\ell+6)z<0,\qquad
		2\ell>0.
		\notag
	\end{align}
	Therefore every nonzero summand in \eqref{eq:Pd} has
	sign $(-1)^k$. Since $2\ell p(z)\neq0$, we obtain
	$(-1)^kP_\ell(z)>0$ for $z<-1$.
	Consequently, $P_\ell(z)$ has no zeros outside $[-1,0]$,
	which completes the proof.
\end{proof}

We next give a result on the location of zeros in the unit disk. 
\begin{lemma}\label{lem:coefficient}
Let $d>0$, $n,\ell\geq0$, and let $|\eta_j|\leq1$ for
$1\leq j\leq\ell$. Then the polynomial
\begin{align}\label{eq:G}
G_{n,d,\ell}(z)
=[t^{2n}]
 \frac{\bigl(1-(\tanh(t))^2\bigr)^d
       \prod_{j=1}^{\ell}
                    \bigl(1-\eta_j(\tanh(t))^2\bigr)}
      {\bigl(1-z(\tanh(t))^2\bigr)^{d+\ell}}
\end{align}
has degree $n$ and all its zeros in $|z|\leq1$.
\end{lemma}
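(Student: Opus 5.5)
Write $u=u(t)=(\tanh t)^2=t^2+O(t^4)$ and expand the denominator of \eqref{eq:G} binomially:
\begin{align*}
G_{n,d,\ell}(z)=\sum_{k\ge0}\frac{(d+\ell)_k}{k!}\,z^k\,[t^{2n}]\Bigl(u^k(1-u)^d\prod_{j=1}^{\ell}(1-\eta_j u)\Bigr).
\end{align*}
Since $u^k=t^{2k}+O(t^{2k+2})$, only $k\le n$ contributes. For $k=n$ the coefficient equals $(d+\ell)_n/n!\neq0$, because the numerator has constant term $1$. This proves the degree claim directly.

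For the location of zeros I would proceed in two reductions. First, fix $z$ with $|z|>1$. Then $G_{n,d,\ell}(z)$ is a symmetric, multi-affine polynomial in $(\eta_1,\dots,\eta_\ell)$. By the Grace--Walsh--Szeg\H{o} coincidence theorem, applied to the closed unit disk (a convex circular region), there is a single $\eta$ with $|\eta|\le1$ such that
\begin{align*}
G_{n,d,\ell}(z)=[t^{2n}]\frac{(1-u)^d(1-\eta u)^\ell}{(1-zu)^{d+\ell}}.
\end{align*}
So it suffices to show that this one-parameter family never vanishes when $|z|>1\ge|\eta|$. Second, I would rewrite the generating function hyperbolically. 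Using $1-u=\operatorname{sech}^2t$ and $1-zu=(\cosh^2t-z\sinh^2t)/\cosh^2t$, the factors become linear in $\cosh 2t$:
\begin{align*}
\frac{(1-u)^d(1-\eta u)^\ell}{(1-zu)^{d+\ell}}
=\frac{\bigl((1+\eta)+(1-\eta)\cosh 2t\bigr)^\ell\,2^{d}}{\bigl((1+z)+(1-z)\cosh 2t\bigr)^{d+\ell}}.
\end{align*}
Extracting $[t^{2n}]$ from a function of $\cosh 2t$ then reduces to a statement about a rational function of one variable $c=\cosh 2t$.

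The core step, which I expect to be the main obstacle, is the base case and its propagation. My first attempt would be an induction on $n$ (and on $\ell$) through a second-order differential recurrence in $z$. Differentiating the generating function in $t$ and in $z$ should produce a relation expressing $G_{n+1}$ through $G_n$, $G_n'$ and $G_n''$ with coefficients of the shape $4z(1+z)$, $4+(4\ell+6)z$ and $2\ell$. After the Möbius change of variables $z\mapsto$ (a map sending the unit disk to a half-plane or slit region, e.g.\ via $w=(1-z)/(1+z)$), I would aim to make it exactly the operator $P_\ell$ of Lemma~\ref{lem:interval}.

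I would first test the case $\ell=0$ and small $n$ to pin down the correct substitution. The goal is that "zeros in $[-1,0]$" for the transformed polynomial corresponds to "zeros in $|z|\le1$" for $G$. If the map only captures real-rooted situations, I would instead handle general $\eta$ with Lemma~\ref{lem:convex}. The idea is to write $G$ (or its image under the recurrence) as a positively weighted combination of simpler pieces whose zeros lie on the unit circle or in the disk, in the form $\sum\rho_j/(z-r_j)=0$, and conclude by convexity of the disk. Finally, I would pass from the case $\eta=1$ to all $|\eta|\le1$ by keeping the $\eta$-dependence in the same convex-hull framework.
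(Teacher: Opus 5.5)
Your degree computation is correct and complete, and more direct than the paper's, which obtains the degree inductively. The Grace--Walsh--Szeg\H{o} reduction to a single $\eta$ and the $\cosh 2t$ rewriting are both valid. But the substance of the lemma, that all zeros lie in $|z|\le1$, is not proved. What follows in your proposal is a set of contingent strategies with no working mechanism, and the main one cannot reach the conclusion.

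Lemma~\ref{lem:interval} can only certify zeros on a real segment. Any change of variables carries a segment to a subset of a line or circle, never to the disk, so this route can serve at most for the base case. For $\ell\ge1$ the zeros are genuinely non-real. For example, $G_{1,d,1}(z)=(d+1)z-d-\eta_1$, whose zero $(d+\eta_1)/(d+1)$ fills a disk as $\eta_1$ varies. Moreover, once the factors $\prod_j(1-\eta_ju)$ are present, $t$-derivatives of the kernel are no longer expressible through $z$-derivatives of that same kernel, so no recurrence of the $P_\ell$ shape is available.

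Even the base case is not pinned down. What works there is a translation, not a M\"obius map to a half-plane: $(1-u)^d(1-zu)^{-d}=\bigl(1-(z-1)\sinh^2t\bigr)^{-d}$. The coefficients $V_{d,n}(w)=(2n)!\,[t^{2n}](1-w\sinh^2t)^{-d}$ satisfy $V_{d,n+1}(w)=w\,P(V_{d,n})(w)$, where $P$ is the operator of Lemma~\ref{lem:interval} at parameter $d$. Hence $V_{d,n}$ has its zeros in $[-1,0]$, and $G_{n,d,0}(z)=V_{d,n}(z-1)/(2n)!$ has its zeros in $[0,1]$.

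The missing idea is an induction on $\ell$ that adds one factor at a time. Differentiating the kernel in $z$ brings down $(d+\ell)u/(1-zu)$, and $(1-\eta u)/(1-zu)=1+(z-\eta)u/(1-zu)$. Together these give
\[
G_{n,d,\ell+1}(z)=G_{n,d,\ell}(z)+\frac{z-\eta_{\ell+1}}{d+\ell}\,G_{n,d,\ell}'(z).
\]
Suppose $G_{n,d,\ell+1}(z_0)=0$ with $|z_0|>1$. Divide by $(z_0-\eta_{\ell+1})G_{n,d,\ell}(z_0)/(d+\ell)$, which is nonzero by the induction hypothesis. This gives $\frac{d+\ell}{z_0-\eta_{\ell+1}}+\sum_j\frac{1}{z_0-r_j}=0$, where the $r_j$ are the zeros of $G_{n,d,\ell}$. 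Lemma~\ref{lem:convex} then puts $z_0$ in the convex hull of $\eta_{\ell+1},r_1,\dots,r_n$, which lies inside the closed unit disk, a contradiction.

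This argument handles distinct $\eta_j$ directly, so your GWS step becomes unnecessary. It is also exactly what your proposed passage ``from $\eta=1$ to all $|\eta|\le1$'' would need: for $\eta_j\equiv1$ one simply has $G_{n,d,\ell}=G_{n,d+\ell,0}$, and moving off $\eta=1$ requires this identity. Without it, or an equivalent, your convex-hull framework has nothing to act on.
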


\begin{proof}
We prove the lemma by induction on $\ell$. 
We first consider the case $\ell=0$. For $d>0$, define
\begin{align}\label{eq:E}
	E_d(t,z)
	&:=\bigl(1-z(\sinh(t))^2\bigr)^{-d},
	\notag\\
	V_{d,n}(z)
	&:=(2n)![t^{2n}]E_d(t,z).
\end{align}
	
	By differentiating with respect to $z$ and $t$, we have
	\begin{align}
		\frac{\partial E_d(t,z)}{\partial z}
		&=\frac{d(\sinh(t))^2}
		{\bigl(1-z(\sinh(t))^2\bigr)^{d+1}},\quad
		\frac{\partial^2 E_d(t,z)}{\partial z^2}
		=\frac{d(d+1)(\sinh(t))^4}
		{\bigl(1-z(\sinh(t))^2\bigr)^{d+2}},
		\notag\\
		\frac{\partial^2 E_d(t,z)}{\partial t^2}
		&=\frac{2dz\bigl(1+2(\sinh(t))^2\bigr)}
		{\bigl(1-z(\sinh(t))^2\bigr)^{d+1}}+
		\frac{4d(d+1)z^2(\sinh(t))^2(\cosh(t))^2}
		{\bigl(1-z(\sinh(t))^2\bigr)^{d+2}}.
		\label{eq:Ederivatives}
	\end{align}
We observe that
	\begin{align}
		\frac{1}
		{\bigl(1-z(\sinh(t))^2\bigr)^{d+1}}
		&=E_d(t,z)
		+\frac{z}{d}\frac{\partial E_d(t,z)}{\partial z},
		\notag\\
		\frac{(\sinh(t))^2}
		{\bigl(1-z(\sinh(t))^2\bigr)^{d+2}}
		&=\frac{1}{d}\frac{\partial E_d(t,z)}{\partial z}
		+\frac{z}{d(d+1)}
		\frac{\partial^2 E_d(t,z)}{\partial z^2}.
		\notag
	\end{align}
	Substituting these identities into
	\eqref{eq:Ederivatives} and using
	$(\cosh(t))^2=1+(\sinh(t))^2$, we obtain
	\begin{align}\label{eq:EPDE}
		\frac{\partial^2 E_d(t,z)}{\partial t^2}
		&=4z^2(1+z)\frac{\partial^2 E_d(t,z)}{\partial z^2}
		+z[4+(4d+6)z]\frac{\partial E_d(t,z)}{\partial z}
		+2dzE_d(t,z).
	\end{align}
	
Since $E_d(t,z)$ is even in $t$, its Taylor expansion is
\begin{align}
	E_d(t,z)
	=\sum_{n=0}^{\infty}V_{d,n}(z)\frac{t^{2n}}{(2n)!}.
	\notag
\end{align}
Differentiating this series twice with respect to $t$,
we obtain
\begin{align*}
	\frac{\partial^2E_d(t,z)}{\partial t^2}
	=\sum_{n=1}^{\infty}
	V_{d,n}(z)\frac{(2n)(2n-1)t^{2n-2}}{(2n)!}
	=\sum_{n=0}^{\infty}
	V_{d,n+1}(z)\frac{t^{2n}}{(2n)!}.
\end{align*}
Similarly, differentiating once and twice with respect
to $z$, respectively, we have
\begin{align*}
	\frac{\partial E_d(t,z)}{\partial z}
	=\sum_{n=0}^{\infty}
	\frac{\mathrm dV_{d,n}(z)}{\mathrm dz}
	\frac{t^{2n}}{(2n)!},
	\quad
	\frac{\partial^2E_d(t,z)}{\partial z^2}
	=\sum_{n=0}^{\infty}
	\frac{\mathrm d^2V_{d,n}(z)}{\mathrm dz^2}
	\frac{t^{2n}}{(2n)!}.
\end{align*}
Substituting these expansions into \eqref{eq:EPDE}
and comparing the coefficients of $t^{2n}/(2n)!$
on both sides, we get
\begin{align}\label{eq:Vrecurrence}
	V_{d,n+1}(z)
	=z\biggl[
	4z(1+z)\frac{\mathrm d^2V_{d,n}(z)}{\mathrm dz^2}+[4+(4d+6)z]\frac{\mathrm dV_{d,n}(z)}{\mathrm dz}
	+2dV_{d,n}(z)
	\biggr],
\end{align}
where $V_{d,0}(z)=1$.
	
We prove by induction on $n$ that $V_{d,n}(z)$ has degree $n$
and all its zeros in $[-1,0]$.
For $n=0$, this follows from $V_{d,0}(z)=1$.
Suppose that $V_{d,n}(z)$ has degree $n$ and all its zeros
in $[-1,0]$.
By Lemma~\ref{lem:interval}, the right-hand side of
\eqref{eq:Vrecurrence} has degree $n+1$ and all its zeros
in $[-1,0]$.
Hence $V_{d,n+1}(z)$ has the required properties.
	
By the definition of $G_{n,d,0}(z)$, we have 
\begin{align}
	G_{n,d,0}(z)
	=[t^{2n}]
	\frac{\bigl(1-(\tanh(t))^2\bigr)^d}
	{\bigl(1-z(\tanh(t))^2\bigr)^d}
	=[t^{2n}]\bigl(1-(z-1)(\sinh(t))^2\bigr)^{-d}
	=\frac{V_{d,n}(z-1)}{(2n)!}.
	\label{eq:Gbase}
\end{align}
Hence $G_{n,d,0}(z)$ has degree $n$ and all its zeros
in $[0,1]$, which proves the case $\ell=0$.
	
Suppose that, for some $\ell\geq0$, $G_{n,d,\ell}(z)$
has degree $n$ and all its zeros in the closed unit disk.
Fix $\eta_{\ell+1}$ with $|\eta_{\ell+1}|\leq1$.
Differentiating \eqref{eq:G} with respect to $z$,
with $\eta_1,\ldots,\eta_\ell$ held fixed, we have
\begin{align}\label{eq:Gz-diff}
	\frac{\mathrm dG_{n,d,\ell}(z)}{\mathrm dz}
	=(d+\ell)[t^{2n}]
	\frac{(\tanh(t))^2
		\bigl(1-(\tanh(t))^2\bigr)^d
		\prod_{j=1}^{\ell}
		\bigl(1-\eta_j(\tanh(t))^2\bigr)}
	{\bigl(1-z(\tanh(t))^2\bigr)^{d+\ell+1}}.
\end{align}
By~\eqref{eq:G} and~\eqref{eq:Gz-diff},
we obtain
\begin{align}
	G_{n,d,\ell+1}(z)
	&=[t^{2n}]
	\frac{\bigl(1-(\tanh(t))^2\bigr)^d
		\prod_{j=1}^{\ell}
		\bigl(1-\eta_j(\tanh(t))^2\bigr)}
	{\bigl(1-z(\tanh(t))^2\bigr)^{d+\ell}}
	\frac{1-\eta_{\ell+1}(\tanh(t))^2}
	{1-z(\tanh(t))^2}
	\notag\\
	&=[t^{2n}]
	\frac{\bigl(1-(\tanh(t))^2\bigr)^d
		\prod_{j=1}^{\ell}
		\bigl(1-\eta_j(\tanh(t))^2\bigr)}
	{\bigl(1-z(\tanh(t))^2\bigr)^{d+\ell}}
	\left(
	1+\frac{(z-\eta_{\ell+1})(\tanh(t))^2}
	{1-z(\tanh(t))^2}
	\right)
	\notag\\
	&=G_{n,d,\ell}(z)
	+\frac{z-\eta_{\ell+1}}{d+\ell}
	\frac{\mathrm dG_{n,d,\ell}(z)}{\mathrm dz}.
	\label{eq:Ginduction}
\end{align}

By the induction hypothesis, $G_{n,d,\ell}(z)$ has degree $n$.
Thus the right-hand side of \eqref{eq:Ginduction}
has degree at most $n$.
Since $d+\ell>0$, we have
\begin{align}
	[z^n]G_{n,d,\ell+1}(z)
	=\left(1+\frac{n}{d+\ell}\right)
	[z^n]G_{n,d,\ell}(z)\neq0.
	\notag
\end{align}
Hence $G_{n,d,\ell+1}(z)$ has degree $n$.

Let $r_1,\ldots,r_n$ be the zeros of $G_{n,d,\ell}(z)$,
counted with multiplicity.
By the induction hypothesis, $|r_j|\leq1$ for every $j$.
Suppose that $G_{n,d,\ell+1}(z_0)=0$ for some $|z_0|>1$.
It follows from~\eqref{eq:Ginduction} that 
\begin{align}
	0
	=G_{n,d,\ell}(z_0)
	+\frac{z_0-\eta_{\ell+1}}{d+\ell}
	\left.
	\frac{\mathrm dG_{n,d,\ell}(z)}{\mathrm dz}
	\right|_{z=z_0}.
	\notag
\end{align}
Since $G_{n,d,\ell}(z_0)\neq0$ and
$z_0\neq\eta_{\ell+1}$, we may divide both sides
of this equality by
$(z_0-\eta_{\ell+1})G_{n,d,\ell}(z_0)/(d+\ell)$.
We obtain
\begin{align}
	0
	=\frac{d+\ell}{z_0-\eta_{\ell+1}}
	+\frac{1}{G_{n,d,\ell}(z_0)}
	\left.
	\frac{\mathrm dG_{n,d,\ell}(z)}{\mathrm dz}
	\right|_{z=z_0}
	=\frac{d+\ell}{z_0-\eta_{\ell+1}}
	+\sum_{j=1}^{n}\frac{1}{z_0-r_j}.
\end{align}
Since $d+\ell>0$ and each term in the sum has coefficient $1$,
Lemma~\ref{lem:convex} implies that $z_0$ belongs to the
convex hull of $\eta_{\ell+1},r_1,\ldots,r_n$.
All these points lie in the closed unit disk.
Since this disk is convex, we have $|z_0|\leq1$,
a contradiction.
Therefore all zeros of $G_{n,d,\ell+1}(z)$ lie in the
closed unit disk, which completes the induction.
\end{proof}

\subsection{The normalized recurrence and its Tur\'an expressions}

\begin{lemma}\label{lem:R}
	Let $S_j(x)$ and $\lambda$ be as in \eqref{eq:swr}.
	Suppose that \eqref{eq:parameters} holds, with $a_1>0$
	and $b_1>0$. Define
	\begin{align}
		R_{j+1}(u)
		&=(u^2-1)\frac{\mathrm dR_j(u)}{\mathrm du}
		+\left(
		\frac{b_1+b_2}{b_1}u+\frac{2a_2}{a_1}
		-\frac{b_1+b_2}{b_1}
		\right)R_j(u),
		\qquad j\geq0,
		\label{eq:Rrecurrence}
	\end{align}
	where $R_0(u)=1$.
	Then, for every $j\geq0$, the degree of $R_j(u)$ is $j$,
	and all its zeros lie in $[-1,1]$.
Moreover, for $u=1+\frac{2b_1}{a_1}(x+\lambda)$, we have
\begin{align}\label{eq:Rnormalization}
	S_j(x)=\left(\frac{a_1}{2}\right)^jR_j(u).
\end{align}
\end{lemma}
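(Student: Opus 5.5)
The plan is to prove the normalization identity \eqref{eq:Rnormalization} first, by a direct change of variables and induction on $j$. After that, the degree and zero-location claims follow from an induction that uses the weighted Gauss--Lucas statement, Lemma~\ref{lem:convex}.

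\emph{Normalization.} Since $a_1,b_1>0$, the map $x\mapsto u=1+\frac{2b_1}{a_1}(x+\lambda)$ is an affine bijection. It gives $x+\lambda=\frac{a_1}{2b_1}(u-1)$ and $\frac{\mathrm d}{\mathrm dx}=\frac{2b_1}{a_1}\frac{\mathrm d}{\mathrm du}$. Substituting into the coefficients of \eqref{eq:swr}, we get
\begin{align*}
(x+\lambda)\bigl[a_1+b_1(x+\lambda)\bigr]\frac{2b_1}{a_1}
&=\frac{a_1}{2}(u^2-1),\\
a_2+(b_1+b_2)(x+\lambda)
&=\frac{a_1}{2}\Bigl(\frac{b_1+b_2}{b_1}(u-1)+\frac{2a_2}{a_1}\Bigr).
\end{align*}
Hence, if $S_j(x)=(a_1/2)^jR_j(u)$, the recurrence \eqref{eq:swr} becomes exactly $(a_1/2)^{j+1}$ times the right-hand side of \eqref{eq:Rrecurrence}. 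Induction from $S_0=R_0=1$ then gives \eqref{eq:Rnormalization}.

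\emph{Degree and zeros.} Put $c=\frac{b_1+b_2}{b_1}>0$ and $e=\frac{2a_2}{a_1}\ge0$. The key observation is that the linear coefficient in \eqref{eq:Rrecurrence} can be written as
\[
cu+e-c=\alpha(u+1)+\beta(u-1),\qquad \alpha=\tfrac e2,\quad \beta=c-\tfrac e2 .
\]
Here $\alpha\ge0$, and $\beta\ge0$ is exactly the hypothesis $a_1(b_1+b_2)\ge a_2b_1$. Also $\alpha+\beta=c>0$. For the degree, if $R_j$ has degree $j$ with leading coefficient $a$, then $[u^{j+1}]R_{j+1}=(j+c)a\neq0$, so $R_{j+1}$ has degree $j+1$.

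For the zeros, assume inductively that the zeros $r_1,\dots,r_j$ of $R_j$ lie in $[-1,1]$. Suppose $R_{j+1}(z_0)=0$ with $z_0\notin[-1,1]$. Then $z_0\neq\pm1$ and $R_j(z_0)\neq0$. Dividing \eqref{eq:Rrecurrence} at $u=z_0$ by $(z_0^2-1)R_j(z_0)$ gives
\[
\sum_{i=1}^{j}\frac{1}{z_0-r_i}+\frac{\alpha}{z_0-1}+\frac{\beta}{z_0+1}=0 .
\]
All weights are nonnegative and their sum is positive, since $j+\alpha+\beta>0$. So Lemma~\ref{lem:convex} puts $z_0$ in the convex hull of points of $[-1,1]$, which is a subset of $[-1,1]$. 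This is a contradiction, and it completes the induction.

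The only step needing care is spotting the decomposition of the linear coefficient into nonnegative multiples of $u\pm1$. This is where the parameter condition \eqref{eq:parameters} enters. Once it is in hand, the rest is the routine weighted Gauss--Lucas argument, the same pattern as in the proof of Lemma~\ref{lem:coefficient}.
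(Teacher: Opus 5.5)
Your proof is correct and follows the paper's own argument. You use the same affine change of variables for the normalization, the same leading-coefficient factor $\bigl(j+\frac{b_1+b_2}{b_1}\bigr)$ for the degree, and the same partial-fraction weights $a_2/a_1$ at $1$ and $\frac{b_1+b_2}{b_1}-\frac{a_2}{a_1}$ at $-1$ fed into Lemma~\ref{lem:convex}; the only cosmetic difference is that you prove the normalization by a separate induction, whereas the paper handles all three claims in one induction.
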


\begin{proof}
	We prove the lemma by induction on $j$.
	For $j=0$, all three conclusions follow from
	$R_0(u)=S_0(x)=1$.
	Suppose that the degree of $R_j(u)$ is $j$,
	all its zeros lie in $[-1,1]$, and
	\begin{align}
		S_j(x)=\left(\frac{a_1}{2}\right)^jR_j(u).
		\notag
	\end{align}
	
	First, by comparing leading coefficients
	in \eqref{eq:Rrecurrence}, we have
	\begin{align}
		[u^{j+1}]R_{j+1}(u)
		=\left(j+\frac{b_1+b_2}{b_1}\right)[u^j]R_j(u)\neq0,
		\notag
	\end{align}
	since $j+(b_1+b_2)/b_1>0$.
	The right-hand side of \eqref{eq:Rrecurrence}
	has degree at most $j+1$.
	Hence the degree of $R_{j+1}(u)$ is $j+1$.
	
	Next, let $r_1,\ldots,r_j$ be the zeros of $R_j(u)$,
	counted with multiplicity.
	Suppose that $R_{j+1}(u_0)=0$ for some
	$u_0\notin[-1,1]$.
	By the induction hypothesis, $R_j(u_0)\neq0$.
	Setting $u=u_0$ in \eqref{eq:Rrecurrence},
	dividing both sides by $(u_0^2-1)R_j(u_0)$, we obtain
	\begin{align}\label{eq:Rroots}
		0=\sum_{i=1}^{j}\frac{1}{u_0-r_i}
		+\frac{
			\frac{b_1+b_2}{b_1}u_0+\frac{2a_2}{a_1}
			-\frac{b_1+b_2}{b_1}}
		{u_0^2-1}
		=\sum_{i=1}^{j}\frac{1}{u_0-r_i}
		+\frac{a_2/a_1}{u_0-1}
		+\frac{(b_1+b_2)/b_1-a_2/a_1}{u_0+1}.
	\end{align}
	By \eqref{eq:parameters}, we have
	\begin{align}
		\frac{a_2}{a_1}\geq0,
		\qquad
		\frac{b_1+b_2}{b_1}-\frac{a_2}{a_1}\geq0.
		\notag
	\end{align}
	By Lemma~\ref{lem:convex}, $u_0$ belongs to the convex hull
	of $r_1,\ldots,r_j,1,-1$, which is contained in $[-1,1]$.
	This contradicts $u_0\notin[-1,1]$.
	Therefore all zeros of $R_{j+1}(u)$ lie in $[-1,1]$.
	
	Finally, by
	$u=1+2b_1(x+\lambda)/a_1$, we have
	\begin{align}
		x+\lambda=\frac{a_1}{2b_1}(u-1),
		\qquad
		\frac{\mathrm du}{\mathrm dx}=\frac{2b_1}{a_1}.
		\notag
	\end{align}
	It follows that
	\begin{align}
		a_2+(b_1+b_2)(x+\lambda)
		&=\frac{a_1}{2}
		\left(
		\frac{b_1+b_2}{b_1}u+\frac{2a_2}{a_1}
		-\frac{b_1+b_2}{b_1}
		\right),
		\notag\\
		(x+\lambda)[a_1+b_1(x+\lambda)]
		\frac{\mathrm du}{\mathrm dx}
		&=\frac{a_1}{2}(u^2-1).
		\notag
	\end{align}
	By the induction hypothesis and the chain rule, we have
	\begin{align}
		\frac{\mathrm dS_j(x)}{\mathrm dx}
		=\left(\frac{a_1}{2}\right)^j
		\frac{\mathrm dR_j(u)}{\mathrm du}
		\frac{\mathrm du}{\mathrm dx}.
		\notag
	\end{align}
	By~\eqref{eq:swr}, we obtain
	\begin{align*}
		S_{j+1}(x)
		&=\left(\frac{a_1}{2}\right)^j
		\biggl[
		\bigl(a_2+(b_1+b_2)(x+\lambda)\bigr)R_j(u)+
		(x+\lambda)[a_1+b_1(x+\lambda)]
		\frac{\mathrm du}{\mathrm dx}
		\frac{\mathrm dR_j(u)}{\mathrm du}
		\biggr]\notag\\
		&=\left(\frac{a_1}{2}\right)^{j+1}
		\biggl[
		(u^2-1)\frac{\mathrm dR_j(u)}{\mathrm du}+
		\left(
		\frac{b_1+b_2}{b_1}u+\frac{2a_2}{a_1}
		-\frac{b_1+b_2}{b_1}
		\right)R_j(u)
		\biggr]\notag\\
		&=\left(\frac{a_1}{2}\right)^{j+1}R_{j+1}(u),
	\end{align*}
	where the last equality follows from \eqref{eq:Rrecurrence}.
	Thus all three conclusions hold for $j+1$,
	which completes the induction.
\end{proof}

For $a_1,b_1>0$, fixed $u\in\mathbb C$, and $t$ sufficiently
close to $0$, define
\begin{align}
	\phi(t,u)
	&=\frac{u-\tanh(t)}{1-u\tanh(t)},
	\label{eq:phi}\\
	H(t,u)
	&=\frac{(1+\tanh(t))^{a_2/a_1}
		(1-\tanh(t))^{(b_1+b_2)/b_1-a_2/a_1}}
	{(1-u\tanh(t))^{(b_1+b_2)/b_1}}.
	\label{eq:phiH}
\end{align}

\begin{lemma}\label{lem:flow}
	Suppose that $a_1,b_1>0$, $a_2,b_2\geq0$, and
	$a_1(b_1+b_2)\geq a_2b_1$.
	Let $R_j(u)$ be defined by \eqref{eq:Rrecurrence},
	with $R_0(u)=1$.
	For $\phi(t,u)$ and $H(t,u)$ defined in
	\eqref{eq:phi} and \eqref{eq:phiH}, respectively, we have
	\begin{align}
		H(t,u)R_m(\phi(t,u))
		=\sum_{j=0}^{\infty}R_{m+j}(u)\frac{t^j}{j!}
		\label{eq:flowseries}
	\end{align}
	for every integer $m\geq0$, every fixed $u\in\mathbb C$,
	and $t$ sufficiently close to $0$.
\end{lemma}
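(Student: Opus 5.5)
We write the recurrence \eqref{eq:Rrecurrence} as $R_{j+1}=D R_j$, where $D$ is the first-order operator
\begin{align*}
Df(u)=(u^2-1)f'(u)+c(u)f(u),\qquad c(u)=\alpha u+\gamma,
\end{align*}
with $\alpha=(b_1+b_2)/b_1$ and $\gamma=2a_2/a_1-\alpha$. Then $R_{m+j}=D^jR_m$, so the right-hand side of \eqref{eq:flowseries} is the formal exponential $\exp(tD)R_m$. The plan is to identify $\exp(tD)$ with a weighted composition operator
\begin{align*}
F(t,u)=H(t,u)\,f(\phi(t,u)),
\end{align*}
obtained by the method of characteristics. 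It then suffices to show the following. First, $F(0,u)=f(u)$. Second, $F$ solves the transport equation
\begin{align*}
\partial_tF=(u^2-1)\partial_uF+c(u)F.
\end{align*}
Granting these, $\partial_t^jF=D^jF$ by induction, since $\partial_t$ commutes with $D$ because the coefficients of $D$ do not depend on $t$. Evaluating at $t=0$ gives $\partial_t^jF(0,u)=D^jf(u)$. Since $F$ is analytic in $t$ near $0$ for fixed $u$, Taylor's theorem yields \eqref{eq:flowseries} with $f=R_m$.

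Both $\phi(t,u)$ and $H(t,u)$ are analytic near $t=0$. For $\phi$ this holds because $1-u\tanh(t)\to1$. For $H$ we use principal branches of the real powers, which is fine since every base tends to $1$; the hypotheses $a_1,b_1>0$ make all exponents well defined. We also have $\phi(0,u)=u$ and $H(0,u)=1$, which gives the initial condition.

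The PDE check splits into two identities. The first is
\begin{align*}
\partial_t\phi=(u^2-1)\partial_u\phi.
\end{align*}
Writing $s=\tanh(t)$ with $ds/dt=1-s^2$, we get
\begin{align*}
\partial_t\phi=(1-s^2)\frac{u^2-1}{(1-us)^2},\qquad \partial_u\phi=\frac{1-s^2}{(1-us)^2},
\end{align*}
so the identity holds. It follows that for any $f$,
\begin{align*}
\partial_t\bigl(f\circ\phi\bigr)-(u^2-1)\partial_u\bigl(f\circ\phi\bigr)=0,
\end{align*}
and the problem reduces to checking
\begin{align*}
\partial_t\log H-(u^2-1)\partial_u\log H=c(u).
\end{align*}
With $p=a_2/a_1$ and $q=\alpha-p$, we have
\begin{align*}
\log H=p\log(1+s)+q\log(1-s)-\alpha\log(1-us).
\end{align*}
Then
\begin{align*}
\partial_t\log H=p(1-s)-q(1+s)+\frac{\alpha u(1-s^2)}{1-us},\qquad (u^2-1)\partial_u\log H=\frac{\alpha s(u^2-1)}{1-us}.
\end{align*}
Their difference is
\begin{align*}
p-q-(p+q)s+\alpha\,\frac{u-us^2-su^2+s}{1-us}.
\end{align*}
The numerator of the last fraction factors as $(u+s)(1-us)$, so this simplifies to
\begin{align*}
p-q-\alpha s+\alpha(u+s)=\alpha u+(p-q)=\alpha u+2p-\alpha=c(u).
\end{align*}
This matches \eqref{eq:Rrecurrence}.

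The main obstacle is this last algebraic verification, namely the factorization producing exactly $c(u)$. A secondary point is justifying the analyticity and the termwise identification $\partial_t^jF|_{t=0}=D^jf$. Because every $R_j$ is a polynomial, this amounts to differentiating an analytic function in $t$ and commuting $\partial_t$ with $D$, which is immediate. The hypothesis $a_1(b_1+b_2)\ge a_2b_1$ is not needed for the identity itself; it only ensures $q\ge0$, which matters for later zero-location arguments.
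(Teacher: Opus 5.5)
Your proof is correct, but it checks a different identity from the paper's.

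\textbf{The paper's route.} The paper never differentiates in $u$. It verifies the characteristic equations $\partial_t\phi=\phi^2-1$ and $\partial_tH=H\,c(\phi)$ along the flow. These give the intertwining relation $\partial_t\bigl(H\,R_j(\phi)\bigr)=H\,(DR_j)(\phi)=H\,R_{j+1}(\phi)$. Iterating and setting $t=0$ yields $R_{m+r}(u)$ directly.

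\textbf{Your route.} You verify the Eulerian (transport) form $\partial_tF=DF$, with $D$ acting in $u$. This needs $\partial_u\phi$, $\partial_u\log H$ and the factorization $u-us^2-su^2+s=(u+s)(1-us)$. You then need the commutation $\partial_t^jF=D^jF$ before evaluating at $t=0$. The two approaches are the two sides of $\frac{\mathrm d}{\mathrm dt}e^{tD}=e^{tD}D=De^{tD}$.

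\textbf{Comparison.} The paper's version is a little more economical and matches the lemma's ``fixed $u$'' formulation exactly, since only $t$-derivatives at a fixed $u$ appear. Yours explains where $\phi$ and $H$ come from (method of characteristics) and makes the semigroup interpretation explicit. Because it uses $u$-derivatives, however, you need $F$ jointly analytic on a neighborhood of $(0,u)$, not merely analytic in $t$ for fixed $u$. This does hold: for $u$ in a small disk, one can choose $|t|$ uniformly small so that all bases stay near $1$ and the principal powers are jointly analytic. Your sentence ``Since $F$ is analytic in $t$ near $0$ for fixed $u$'' glosses over this point.

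Your remark that $a_1(b_1+b_2)\geq a_2b_1$ is not needed for this identity is correct.
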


\begin{proof}
	Fix $u\in\mathbb C$.
	Since $\tanh(t)$ is analytic near $t=0$ and vanishes there,
	the definitions in \eqref{eq:phi} and \eqref{eq:phiH}
	give functions analytic near $t=0$, with
	$H(0,u)=1$ and $\phi(0,u)=u$.
	Differentiating with respect to $t$, we obtain
	\begin{align}
		\frac{\partial\phi(t,u)}{\partial t}
		&=\frac{(u^2-1)(1-(\tanh(t))^2)}
		{(1-u\tanh(t))^2}
		=\phi(t,u)^2-1,
		\label{eq:phiderivative}\\
		\frac{\partial H(t,u)}{\partial t}
		&=H(t,u)\left(
		\frac{b_1+b_2}{b_1}\phi(t,u)
		+\frac{2a_2}{a_1}-\frac{b_1+b_2}{b_1}
		\right).
		\label{eq:Hderivative}
	\end{align}
	Using these identities and the product and chain rules,
	we have
	\begin{align}
		\frac{\partial}{\partial t}
		\bigl(H(t,u)R_j(\phi(t,u))\bigr)
		&=H(t,u)\biggl[
		\bigl(\phi(t,u)^2-1\bigr)
		\left.
		\frac{\mathrm dR_j(v)}{\mathrm dv}
		\right|_{v=\phi(t,u)}
		\notag\\
		&\qquad+
		\left(
		\frac{b_1+b_2}{b_1}\phi(t,u)
		+\frac{2a_2}{a_1}-\frac{b_1+b_2}{b_1}
		\right)R_j(\phi(t,u))
		\biggr]
		\notag\\
		&=H(t,u)R_{j+1}(\phi(t,u)),
		\label{eq:shift}
	\end{align}
	where the last equality follows from \eqref{eq:Rrecurrence}.
	By repeated application of \eqref{eq:shift}, we obtain
	\begin{align}
		\left.
		\frac{\partial^r}{\partial t^r}
		\bigl(H(t,u)R_m(\phi(t,u))\bigr)
		\right|_{t=0}
		=H(0,u)R_{m+r}(\phi(0,u))
		=R_{m+r}(u),
		\qquad r\geq0.
		\notag
	\end{align}
	Taylor's formula therefore gives \eqref{eq:flowseries}.
\end{proof}

\begin{proposition}\label{lem:RTuran}
	Suppose that $a_1,b_1>0$, $a_2,b_2\geq0$, and
	$a_1(b_1+b_2)\geq a_2b_1$.
	Let $R_j(u)$ be defined by \eqref{eq:Rrecurrence},
	with $R_0(u)=1$.
	Then, for all integers $m\geq0$ and $n\geq1$, we have
	\begin{align}
		\T_{m,n}(R;u)\neq0,
		\qquad |u|>1.
		\label{eq:RTuran}
	\end{align}
\end{proposition}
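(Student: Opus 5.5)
The plan is to write $\T_{m,n}(R;u)$ as a Taylor coefficient of a product of two copies of the generating function from Lemma~\ref{lem:flow}, and then recognize that coefficient as a polynomial $G_{n,d,\ell}$ from Lemma~\ref{lem:coefficient}.

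\emph{Step 1: a coefficient formula.} Fix $u$ with $|u|>1$ and put $A(t)=\sum_{j\ge0}R_{m+j}(u)t^j/j!$. Then
\begin{align*}
[t^{2n}]A(t)A(-t)=\sum_{j=0}^{2n}\frac{(-1)^{2n-j}}{j!(2n-j)!}R_{m+j}(u)R_{m+2n-j}(u).
\end{align*}
Comparing with \eqref{eq:turan} gives $\T_{m,n}(R;u)=(-1)^n[t^{2n}]A(t)A(-t)$. By Lemma~\ref{lem:flow}, $A(\pm t)=H(\pm t,u)R_m(\phi(\pm t,u))$ for $t$ near $0$. Hence
\begin{align*}
\T_{m,n}(R;u)=(-1)^n[t^{2n}]\,H(t,u)H(-t,u)R_m(\phi(t,u))R_m(\phi(-t,u)).
\end{align*}

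\emph{Step 2: simplify the product in $\tau=\tanh(t)$.} Write $c=(b_1+b_2)/b_1\ge1$. Since $\tanh$ is odd, the mixed exponents $a_2/a_1$ and $c-a_2/a_1$ combine, and
\begin{align*}
H(t,u)H(-t,u)=\frac{(1-\tau^2)^{c}}{(1-u^2\tau^2)^{c}}.
\end{align*}
By Lemma~\ref{lem:R}, we may write $R_m(v)=\kappa_m\prod_{i=1}^m(v-r_i)$ with $\kappa_m\neq0$ and every $r_i\in[-1,1]$. A direct computation from \eqref{eq:phi} gives
\begin{align*}
(\phi(t,u)-r)(\phi(-t,u)-r)=\frac{(u-r)^2-\tau^2(1-ru)^2}{1-u^2\tau^2}.
\end{align*}
Since $|u|>1\ge|r_i|$, we have $u\ne r_i$, so we can set $\eta_i=\bigl((1-r_iu)/(u-r_i)\bigr)^2$. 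Combining the two factors yields
\begin{align*}
\T_{m,n}(R;u)=(-1)^n\kappa_m^2\prod_{i=1}^m(u-r_i)^2\;[t^{2n}]\frac{(1-\tau^2)^c\prod_{i=1}^m(1-\eta_i\tau^2)}{(1-u^2\tau^2)^{c+m}}.
\end{align*}
The coefficient on the right is exactly $G_{n,c,m}(z)$ from \eqref{eq:G}, evaluated at $z=u^2$, with the parameters $\eta_i$ held fixed.

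\emph{Step 3: apply the disk lemma.} We need $|\eta_i|\le1$. This follows from the identity $|u-r|^2-|1-ru|^2=(1-r^2)(|u|^2-1)\ge0$, valid for real $r\in[-1,1]$. With this, Lemma~\ref{lem:coefficient} (with $d=c>0$ and $\ell=m$) shows that $G_{n,c,m}$ has degree $n$ and all its zeros in $|z|\le1$. Since $|u^2|>1$, we get $G_{n,c,m}(u^2)\neq0$. The prefactor $\kappa_m^2\prod(u-r_i)^2$ is also nonzero, so $\T_{m,n}(R;u)\neq0$.

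\emph{Main obstacle.} The delicate point is that the $\eta_i$ depend on $u$. This is legitimate because $u$ is fixed before Lemma~\ref{lem:coefficient} is invoked: the lemma is applied to the polynomial in $z$ with these fixed $\eta_i$, which is then evaluated at $z=u^2$. One must check that this polynomial, evaluated at $z=u^2$, really equals the coefficient computed in Step 2. This is an identity of Taylor coefficients of functions analytic near $t=0$, and it holds because the $\eta_i$ enter only as constants. The remaining computations (the product $H(t,u)H(-t,u)$ and the Möbius-type identity) are routine.
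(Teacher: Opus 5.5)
Your proposal is correct and follows essentially the same route as the paper: you write $\T_{m,n}(R;u)$ as $(-1)^n[t^{2n}]$ of the paired product from Lemma~\ref{lem:flow}, then use $H(t,u)H(-t,u)$ and the M\"obius identity for $\phi$ to factor it as $R_m(u)^2\,G_{n,(b_1+b_2)/b_1,m}(u^2)$ with the same $\eta_j$, and you conclude from Lemma~\ref{lem:coefficient} via $|\eta_j|\le1$ and $|u^2|>1$. Your remark that the $\eta_j$ are held fixed once $u$ is fixed is also how the paper justifies applying the lemma.
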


\begin{proof}
	Fix $u\in\mathbb C$ with $|u|>1$.
	
	By Lemma~\ref{lem:R}, the degree of $R_m(v)$ is $m$,
	and all its zeros lie in $[-1,1]$.
Let $r_1,\ldots,r_m$ be the zeros of $R_m(v)$,
counted with multiplicity.
	Then
	\begin{align}
		R_m(v)=C_m\prod_{j=1}^{m}(v-r_j),
		\notag
	\end{align}
	where $C_m\neq0$ is the leading coefficient of $R_m(v)$.
	Since $|u|>1$, we have $u-r_j\neq0$ for every $j$
	and $R_m(u)\neq0$.
	Set
	\begin{align}
		\eta_j=\left(\frac{1-r_ju}{u-r_j}\right)^2,
		\qquad 1\leq j\leq m.
		\label{eq:eta}
	\end{align}
	Since $r_j$ is real and belongs to $[-1,1]$, we have
	$|u-r_j|^2-|1-r_ju|^2=(1-r_j^2)(|u|^2-1)\geq0.$
	Therefore
	\begin{align}
		|\eta_j|
		=\frac{|1-r_ju|^2}{|u-r_j|^2}\leq1.
		\notag
	\end{align}
	
	Using \eqref{eq:phi}, we get
	\begin{align}
		\phi(t,u)-r_j
		=\frac{(u-r_j)-(1-r_ju)\tanh(t)}
		{1-u\tanh(t)}.
		\notag
	\end{align}
	Since $\tanh(-t)=-\tanh(t)$, it follows that
	\begin{align}
		\frac{(\phi(t,u)-r_j)(\phi(-t,u)-r_j)}
		{(u-r_j)^2}
		=\frac{(u-r_j)^2-(1-r_ju)^2(\tanh(t))^2}
		{(u-r_j)^2\bigl(1-u^2(\tanh(t))^2\bigr)}
=\frac{1-\eta_j(\tanh(t))^2}
		{1-u^2(\tanh(t))^2}.
		\label{eq:rootpair}
	\end{align}
	
It follows from \eqref{eq:phiH} that
	\begin{align}
		H(t,u)H(-t,u)
		&=\frac{(1+\tanh(t))^{(b_1+b_2)/b_1}
			(1-\tanh(t))^{(b_1+b_2)/b_1}}
		{(1-u\tanh(t))^{(b_1+b_2)/b_1}
			(1+u\tanh(t))^{(b_1+b_2)/b_1}}
		\notag\\
		&=\frac{\bigl(1-(\tanh(t))^2\bigr)^{(b_1+b_2)/b_1}}
		{\bigl(1-u^2(\tanh(t))^2\bigr)^{(b_1+b_2)/b_1}}.
		\label{eq:Hpair}
	\end{align}
	These identities hold for $t$ sufficiently close to $0$.
	
	By the factorization of $R_m(v)$ and \eqref{eq:rootpair},
	we have
	\begin{align}\label{eq:Rpair}
		\frac{R_m(\phi(t,u))R_m(\phi(-t,u))}
		{R_m(u)^2}
		=\prod_{j=1}^{m}
		\frac{(\phi(t,u)-r_j)(\phi(-t,u)-r_j)}
		{(u-r_j)^2}
		=\frac{\prod_{j=1}^{m}
			\bigl(1-\eta_j(\tanh(t))^2\bigr)}
		{\bigl(1-u^2(\tanh(t))^2\bigr)^m}.
	\end{align}
	Multiplying both sides of \eqref{eq:Rpair}
	by $H(t,u)H(-t,u)R_m(u)^2$
	and using \eqref{eq:Hpair}, we obtain
	\begin{align}\label{eq:pairedkernel}
		&H(t,u)H(-t,u)R_m(\phi(t,u))R_m(\phi(-t,u))
		\notag\\
		&\quad=R_m(u)^2
		\frac{\bigl(1-(\tanh(t))^2\bigr)^{(b_1+b_2)/b_1}
			\prod_{j=1}^{m}
			\bigl(1-\eta_j(\tanh(t))^2\bigr)}
		{\bigl(1-u^2(\tanh(t))^2\bigr)^{(b_1+b_2)/b_1+m}}.
	\end{align}
	By~\eqref{eq:flowseries}, 
	we obtain
	\begin{align}\label{eq:flowproduct}
		[t^{2n}]
		\bigl(H(t,u)R_m(\phi(t,u))
		H(-t,u)R_m(\phi(-t,u))\bigr)
		&=
		\sum_{j=0}^{2n}\frac{(-1)^j}{j!(2n-j)!}
		R_{m+j}(u)R_{m+2n-j}(u)
		\notag\\
		&=(-1)^n\T_{m,n}(R;u).
	\end{align}
	Applying~\eqref{eq:G} and~\eqref{eq:pairedkernel} to~\eqref{eq:flowproduct},
	we get
	\begin{align}
		\T_{m,n}(R;u)
		=(-1)^nR_m(u)^2G_{n,(b_1+b_2)/b_1,m}(u^2).
		\label{eq:Tkernel}
	\end{align}
	
	For the fixed $u$, the numbers $\eta_1,\ldots,\eta_m$
	in \eqref{eq:eta} are held fixed.
	Since $(b_1+b_2)/b_1>0$ and $|\eta_j|\leq1$,
	Lemma~\ref{lem:coefficient} 
	shows that all zeros of
	$G_{n,(b_1+b_2)/b_1,m}(z)$ lie in $|z|\leq1$.
	As $|u^2|>1$, we have
	\begin{align}
		G_{n,(b_1+b_2)/b_1,m}(u^2)\neq0.
		\notag
	\end{align}
	Since $R_m(u)\neq0$, it follows from \eqref{eq:Tkernel}
	that $\T_{m,n}(R;u)\neq0$.
\end{proof}

\subsection{Boundary parameters and completion of the proof}

\begin{lemma}\label{lem:boundary}
	Let $S_j(x)$ be defined by \eqref{eq:swr}.
	Suppose that
	\begin{align*}
		a_1,a_2,b_1,b_2,\lambda\geq0,\quad
		a_1(b_1+b_2)\geq a_2b_1,\quad
		b_1+b_2>0,\quad a_1+b_1>0.
	\end{align*}
	For all integers $m\geq0$ and $n\geq1$,
	the following conclusions hold.
	
	If $a_1=0$ and $b_1>0$, then
	\begin{align}
		\T_{m,n}(S;x)
		=\frac{(-1)^n b_1^{2m+2n}}{n!}
		\left(\frac{b_1+b_2}{b_1}\right)_m^2
		\left(\frac{b_1+b_2}{b_1}+m\right)_n
		(x+\lambda)^{2m+2n}.
		\label{eq:monomialT}
	\end{align}
	If $b_1=0$, then the degree of $\T_{m,n}(S;x)$
	is $2m+n$, and
	\begin{align}
		[x^{2m+n}]\T_{m,n}(S;x)
		=\frac{(-1)^n a_1^n b_2^{2m+n}}{n!}\neq0.
		\label{eq:boundarynonzero}
	\end{align}
\end{lemma}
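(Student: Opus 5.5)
The proof splits along the two boundary cases. In both I will use the device already exploited in the proof of Proposition~\ref{lem:RTuran}: for any sequence $F$,
\begin{align*}
	\T_{m,n}(F;x)=(-1)^n[t^{2n}]\bigl(\Phi_m(t)\Phi_m(-t)\bigr),
	\qquad
	\Phi_m(t)=\sum_{j\ge0}F_{m+j}(x)\frac{t^j}{j!}.
\end{align*}
The plan in each case is to find $\Phi_m$ explicitly, or as a flow analogous to Lemma~\ref{lem:flow}, and then extract the coefficient of $t^{2n}$.

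\emph{Case $a_1=0$, $b_1>0$.} From $a_1(b_1+b_2)\ge a_2b_1$ we get $a_2b_1\le 0$, so $a_2=0$. Writing $y=x+\lambda$, \eqref{eq:swr} becomes
\begin{align*}
	S_{j+1}=(b_1+b_2)\,y\,S_j+b_1y^2S_j'.
\end{align*}
An induction shows $S_j=c_jy^j$ with $c_{j+1}=(b_1+b_2+jb_1)c_j$. Setting $\beta=(b_1+b_2)/b_1>0$, this gives $c_j=b_1^j(\beta)_j$. Every term of \eqref{eq:turan} is therefore a multiple of $y^{2m+2n}$. Next I factor $(\beta)_{m+j}=(\beta)_m(\beta+m)_j$ and put $\gamma=\beta+m$. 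It then remains to evaluate
\begin{align*}
	\sum_{j=0}^{2n}\frac{(-1)^j(\gamma)_j(\gamma)_{2n-j}}{j!\,(2n-j)!}
	=[t^{2n}](1+t)^{-\gamma}(1-t)^{-\gamma}
	=[t^{2n}](1-t^2)^{-\gamma}
	=\frac{(\gamma)_n}{n!}.
\end{align*}
Multiplying by $(-1)^n b_1^{2m+2n}(\beta)_m^2$ yields \eqref{eq:monomialT}.

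\emph{Case $b_1=0$.} Here $b_2>0$ and $a_1>0$, and with $y=x+\lambda$,
\begin{align*}
	S_{j+1}=(a_2+b_2y)S_j+a_1yS_j'.
\end{align*}
This time the flow is explicit. I take $\phi(t,y)=ye^{a_1t}$, which solves $\partial_t\phi=a_1\phi$, and
\begin{align*}
	H(t,y)=\exp\bigl(a_2t+b_2y(e^{a_1t}-1)/a_1\bigr),
\end{align*}
which solves $\partial_tH=(a_2+b_2\phi)H$. The argument of Lemma~\ref{lem:flow} then gives
\begin{align*}
	\sum_j S_{m+j}\frac{t^j}{j!}=H(t,y)\,S_m(ye^{a_1t}).
\end{align*}
Pairing $t$ with $-t$ cancels the $a_2$ terms and leaves
\begin{align*}
	\exp\bigl(2b_2y(\cosh(a_1t)-1)/a_1\bigr)\,S_m(ye^{a_1t})\,S_m(ye^{-a_1t}).
\end{align*}

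The main point is a degree count in $y$ for the coefficient of $t^{2n}$. The $k$-th term of the exponential is $(b_2y)^k\bigl(2(\cosh a_1t-1)/a_1\bigr)^k/k!$, and it begins at order $t^{2k}$ with leading part $(b_2y)^ka_1^kt^{2k}/k!$. The factor $S_m(ye^{a_1t})S_m(ye^{-a_1t})$ has $y$-degree at most $2m$. Its $y^{2m}$ part is $b_2^{2m}y^{2m}$, because $S_m$ has leading coefficient $b_2^m$ (from $c_{j+1}=b_2c_j$) and $e^{ma_1t}e^{-ma_1t}=1$; this part is independent of $t$. Hence in $[t^{2n}]$ every contribution has $y$-degree at most $2m+k$ with $k\le n$. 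The degree $2m+n$ is attained only by $k=n$ together with the top part of the $S_m$ product, and that contribution equals $b_2^{2m}\,b_2^na_1^n/n!$. After multiplying by $(-1)^n$ this gives \eqref{eq:boundarynonzero}, and since it is nonzero, the degree of $\T_{m,n}(S;x)$ is exactly $2m+n$.

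The step most likely to need care is this last degree bookkeeping, in particular checking that no other pair $(k,\text{degree})$ reaches $y^{2m+n}$.
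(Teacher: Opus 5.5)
Your proposal is correct and matches the paper's proof essentially step for step. In the case $a_1=0$ you use the same monomial induction $S_j=b_1^j(\beta)_j(x+\lambda)^j$ and the same evaluation via $[t^{2n}](1-t^2)^{-\gamma}$. In the case $b_1=0$ you use the same exponential generating function $\exp\bigl(a_2t+b_2y(e^{a_1t}-1)/a_1\bigr)S_m(ye^{a_1t})$, pair $t$ with $-t$, and make the same degree count in $y$ to isolate the coefficient of $y^{2m+n}$.
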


\begin{proof}
Suppose that $a_1=0$ and $b_1>0$.
Since $a_1(b_1+b_2)\geq a_2b_1$,
we have $a_2=0$.
Starting with $S_0(x)=1$, induction using \eqref{eq:swr}
therefore gives
\begin{align}
	S_j(x)
	=b_1^j\left(\frac{b_1+b_2}{b_1}\right)_j(x+\lambda)^j.
	\label{eq:monomial}
\end{align}
Substituting \eqref{eq:monomial} into \eqref{eq:turan}
and using $(\alpha)_{m+j}=(\alpha)_m(\alpha+m)_j$,
we obtain
\begin{align*}
	\T_{m,n}(S;x)
	=(-1)^n b_1^{2m+2n}
	\left(\frac{b_1+b_2}{b_1}\right)_m^2
	(x+\lambda)^{2m+2n}
	\sum_{j=0}^{2n}
	\frac{(-1)^j
		\left(\frac{b_1+b_2}{b_1}+m\right)_j
		\left(\frac{b_1+b_2}{b_1}+m\right)_{2n-j}}
	{j!(2n-j)!}.
\end{align*}
By the binomial theorem, we have
\begin{align}
	&\sum_{j=0}^{2n}
	\frac{(-1)^j
		\left(\frac{b_1+b_2}{b_1}+m\right)_j
		\left(\frac{b_1+b_2}{b_1}+m\right)_{2n-j}}
	{j!(2n-j)!}
	\notag\\
	&\quad=[t^{2n}]
	(1-t)^{-(b_1+b_2)/b_1-m}
	(1+t)^{-(b_1+b_2)/b_1-m}
	\notag\\
	&\quad=[t^{2n}](1-t^2)^{-(b_1+b_2)/b_1-m}
	\notag\\
	&\quad=\frac{1}{n!}
	\left(\frac{b_1+b_2}{b_1}+m\right)_n.
	\notag
\end{align}
Substituting this value into the preceding expression
for $\T_{m,n}(S;x)$ gives \eqref{eq:monomialT}.

Now suppose that $b_1=0$.
Since $a_1+b_1>0$ and $b_1+b_2>0$, we have
$a_1>0$ and $b_2>0$.
Set $z=x+\lambda$ and define
\begin{align}\label{eq:boundaryF}
	F(t,z)
	=\exp\left(a_2t+\frac{b_2}{a_1}z(e^{a_1t}-1)\right).
\end{align}
Differentiating with respect to $t$, we obtain
\begin{align*}
	\frac{\partial F(t,z)}{\partial t}
	=(a_2+b_2ze^{a_1t})F(t,z),
	\qquad F(0,z)=1.
\end{align*}
For every $j\geq0$, the product and chain rules,
together with \eqref{eq:swr}, give
\begin{align}\label{eq:boundaryshift}
	\frac{\partial}{\partial t}
	\bigl(F(t,z)S_j(ze^{a_1t}-\lambda)\bigr)
	&=F(t,z)\biggl[
	(a_2+b_2ze^{a_1t})S_j(ze^{a_1t}-\lambda)
	+
	a_1ze^{a_1t}
	\left.
	\frac{\mathrm dS_j(v)}{\mathrm dv}
	\right|_{v=ze^{a_1t}-\lambda}
	\biggr]
	\notag\\
	&=F(t,z)S_{j+1}(ze^{a_1t}-\lambda).
\end{align}
Since $S_0(x)=1$, repeated application of
\eqref{eq:boundaryshift} yields
\begin{align}\label{eq:boundaryderivative}
	\frac{\partial^mF(t,z)}{\partial t^m}
	=F(t,z)S_m(ze^{a_1t}-\lambda).
\end{align}
Setting $t=0$, we get
\begin{align*}
	\left.
	\frac{\partial^mF(t,z)}{\partial t^m}
	\right|_{t=0}
	=S_m(z-\lambda).
\end{align*}
Since $F(t,z)$ is entire in $t$, Taylor's formula gives
\begin{align}\label{eq:boundaryseries}
	F(t,z)=\sum_{j=0}^{\infty}S_j(z-\lambda)\frac{t^j}{j!}.
\end{align}

Differentiating \eqref{eq:boundaryseries} $m$ times
and using \eqref{eq:boundaryderivative}, we have
\begin{align}\label{eq:boundaryshiftedseries}
	F(t,z)S_m(ze^{a_1t}-\lambda)
	=\sum_{j=0}^{\infty}S_{m+j}(z-\lambda)\frac{t^j}{j!}.
\end{align}
From~\eqref{eq:boundaryF}, we obtain 
\begin{align}\label{eq:boundaryFpair}
	F(t,z)F(-t,z)
	=\exp\left(
	\frac{b_2}{a_1}z(e^{a_1t}+e^{-a_1t}-2)
	\right)
	=\exp\left(
	\frac{2b_2}{a_1}z(\cosh(a_1t)-1)
	\right).
\end{align}
Multiplying the expansions in \eqref{eq:boundaryshiftedseries}
at $t$ and $-t$, extracting the coefficient of $t^{2n}$,
and using \eqref{eq:boundaryFpair} and \eqref{eq:turan},
we obtain
\begin{align}\label{eq:boundarypair}
	(-1)^n\T_{m,n}(S;z-\lambda)
	&=\sum_{j=0}^{2n}
	\frac{(-1)^j
		S_{m+j}(z-\lambda)S_{m+2n-j}(z-\lambda)}
	{j!(2n-j)!}
	\notag\\
	&=[t^{2n}]\biggl[
	\exp\left(
	\frac{2b_2}{a_1}z(\cosh(a_1t)-1)
	\right)
	S_m(ze^{a_1t}-\lambda)
	S_m(ze^{-a_1t}-\lambda)
	\biggr].
\end{align}

Comparing leading coefficients in \eqref{eq:swr},
we see that the degree of $S_m(z-\lambda)$ is $m$
and its leading coefficient is $b_2^m$.
Hence the product of the last two factors in
\eqref{eq:boundarypair} has degree $2m$ in $z$, and
\begin{align}\label{eq:boundaryproductleading}
	[z^{2m}]
	\bigl(S_m(ze^{a_1t}-\lambda)
	S_m(ze^{-a_1t}-\lambda)\bigr)=
	b_2^me^{ma_1t}\,b_2^me^{-ma_1t}
	=b_2^{2m}.
\end{align}
On the other hand, expanding the exponential gives
\begin{align}\label{eq:boundaryexpansion}
	\exp\left(
	\frac{2b_2}{a_1}z(\cosh(a_1t)-1)
	\right)=
	\sum_{r=0}^{\infty}\frac{z^r}{r!}
	\left(
	\frac{2b_2}{a_1}(\cosh(a_1t)-1)
	\right)^r.
\end{align}
Since
\begin{align*}
	\frac{2b_2}{a_1}(\cosh(a_1t)-1)
	=a_1b_2t^2+O(t^4),
\end{align*}
the term indexed by $r$ in \eqref{eq:boundaryexpansion}
starts at degree $2r$ in $t$.
Therefore only $r\leq n$ can contribute to $[t^{2n}]$
in \eqref{eq:boundarypair}.
Since the polynomial product has degree $2m$ in $z$,
it follows that the degree of
$\T_{m,n}(S;z-\lambda)$ is at most $2m+n$.

To obtain the coefficient of $z^{2m+n}$,
we must take $r=n$ in \eqref{eq:boundaryexpansion}
and the leading term in \eqref{eq:boundaryproductleading}.
Consequently,
\begin{align*}
	[z^{2m+n}]\T_{m,n}(S;z-\lambda)
	&=\frac{(-1)^n b_2^{2m}}{n!}
	[t^{2n}]
	\left(
	\frac{2b_2}{a_1}(\cosh(a_1t)-1)
	\right)^n\\
	&=\frac{(-1)^n a_1^n b_2^{2m+n}}{n!}\neq0.
\end{align*}
Hence the degree is exactly $2m+n$.
Since the substitution $z=x+\lambda$ preserves the degree
and the leading coefficient, we obtain
\eqref{eq:boundarynonzero}.
\end{proof}

\begin{proof}[\textbf{Proof of Theorem~\ref{thm:swr}}]
Suppose first that $a_1>0$ and $b_1>0$, and use the coordinate
$u$ in~\eqref{eq:Rnormalization}. By Lemma~\ref{lem:R}, all products
in~\eqref{eq:turan} acquire the same factor, so
\begin{align}
\T_{m,n}(S;x)
=\left(\frac{a_1}{2}\right)^{2m+2n}\T_{m,n}(R;u).
\label{eq:Tnormalization}
\end{align}
If $\Rea x>-\lambda$, then $\Rea u>1$ and hence $|u|>1$.
Proposition~\ref{lem:RTuran} therefore proves the required
nonvanishing. If $a_1=0$ and $b_1>0$, the same conclusion
follows from~\eqref{eq:monomialT}, since all its constant
factors are nonzero and $x+\lambda\neq0$ in this half-plane.

It remains to consider $b_1=0$, in which case $a_1>0$ and
$b_2>0$. Replace $b_1$ by $\varepsilon>0$, leaving the other
parameters fixed, and denote the resulting sequence by
$S^{(\varepsilon)}=(S_j^{(\varepsilon)}(x))_{j\geq0}$.
For all sufficiently small $\varepsilon$, we have
\begin{align}
a_1(b_2+\varepsilon)-a_2\varepsilon
=a_1b_2+(a_1-a_2)\varepsilon>0.
\notag
\end{align}
The case already proved shows that
$\T_{m,n}(S^{(\varepsilon)};x)$ has no zeros in
$\Rea x>-\lambda$. By~\eqref{eq:swr}, every coefficient
converges to the corresponding coefficient of
$\T_{m,n}(S;x)$ as $\varepsilon\to0^+$. Since the degrees
are bounded by $2m+2n$, the convergence is uniform on compact
subsets of $\mathbb C$. By Lemma~\ref{lem:boundary}, the limit
is not identically zero. Theorem~\ref{thm:hurwitz} therefore
gives its nonvanishing in $\Rea x>-\lambda$, as required.
\end{proof}

\section{Proof of Theorem~\ref{thm:laguerre}}\label{sec:laguerre}

Since the Laguerre polynomials are orthogonal on $(0,\infty)$
with respect to the weight $e^{-x}$, $L_j(x)$ has $j$ simple
positive zeros~\cite[Theorem~3.3.1 and \S5.1]{Szego}.
It follows that all zeros of $Q_j(x)=L_j(-x)$ are negative.

\begin{lemma}\label{lem:sector}
	Let $p(t)$ be a polynomial of degree $h$ with $p(0)\neq0$.
	Suppose that all its zeros lie in the sector
	\begin{align*}
		\left\{
		re^{\mathrm i\theta}:
		r>0,\ \beta\leq\theta\leq\beta+\alpha
		\right\},
	\end{align*}
	where $\beta\in\mathbb R$ and $0\leq\alpha<\pi/2$.
	Then, for every integer $n$ with $0\leq n\leq h$, we have
	\begin{align}\label{eq:sector}
		(-1)^n[t^{2n}]p(t)p(-t)\neq0.
	\end{align}
\end{lemma}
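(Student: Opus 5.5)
The plan is to factor $p(t)p(-t)$ explicitly as a polynomial in $s=t^2$. Then the coefficient in \eqref{eq:sector} becomes an elementary symmetric function of the squared zeros, and its nonvanishing follows from the Gauss--Lucas theorem (Theorem~\ref{thm:gausslucas}).

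\textbf{Step 1 (reduction to a polynomial in $s=t^2$).} Write $p(t)=c\prod_{k=1}^{h}(t-z_k)$ with $c\neq0$. Each $z_k$ lies in the given sector, and $z_k\neq0$ because $p(0)\neq0$. Since $(t-z_k)(-t-z_k)=z_k^2-t^2$, we get
\begin{align*}
p(t)p(-t)=c^2\prod_{k=1}^{h}\bigl(z_k^2-s\bigr),\qquad s=t^2 .
\end{align*}
Hence $(-1)^n[t^{2n}]p(t)p(-t)=c^2\,e_{h-n}(z_1^2,\ldots,z_h^2)$, where $e_r$ is the $r$-th elementary symmetric polynomial and $e_0=1$. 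So it suffices to show $e_{h-n}(w_1,\ldots,w_h)\neq0$ for $w_k=z_k^2$ and $0\leq n\leq h$.

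\textbf{Step 2 (rotation into a half-plane).} Each $w_k$ is nonzero and has argument in $[2\beta,2\beta+2\alpha]$, an interval of length $2\alpha<\pi$. Set $v_k=e^{-\mathrm i(2\beta+\alpha)}w_k$. Then every $v_k$ lies in the closed sector $\{re^{\mathrm i\theta}:r>0,\ |\theta|\leq\alpha\}$, and $e_r(w)=e^{\mathrm i r(2\beta+\alpha)}e_r(v)$. It therefore suffices to show $e_r(v)\neq0$ for $0\leq r\leq h$.

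One cannot argue termwise here: the products of $r$ of the $v_k$ may spread over an angle $2r\alpha$, which can exceed $\pi$. This is the main obstacle, and I would handle it with Gauss--Lucas rather than with argument bounds.

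\textbf{Step 3 (Gauss--Lucas).} Consider $f(s)=\prod_{k=1}^h(s+v_k)$. Its zeros $-v_k$ all lie in the closed sector $K=\{re^{\mathrm i\theta}:r>0,\ |\theta-\pi|\leq\alpha\}$. Because $\alpha<\pi/2$, this sector together with its apex is convex, and the points $-v_k$ all have real part at most $-|v_k|\cos\alpha<0$. Hence their convex hull is contained in $\{\Rea s<0\}$ and does not contain $0$.

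The coefficient $e_r(v)$ is the coefficient of $s^{h-r}$ in $f$, so $e_r(v)=f^{(h-r)}(0)/(h-r)!$. For $r=0$ this is $1$. For $1\leq r\leq h$ the derivative order $h-r$ satisfies $0\leq h-r\leq h-1$. By Theorem~\ref{thm:gausslucas}, every zero of $f^{(h-r)}$ lies in the convex hull of the zeros of $f$; for $h-r=0$ this is immediate. That hull avoids $0$, so $f^{(h-r)}(0)\neq0$.

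Combining Steps 1--3 gives \eqref{eq:sector}.
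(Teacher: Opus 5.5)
Your proof is correct and follows essentially the paper's route: write $p(t)p(-t)$ as a product over the squared zeros, rotate these into the open right half-plane, and apply Gauss--Lucas to $\prod(s+v_k)$ at $s=0$ to show that the relevant elementary symmetric function is nonzero. The only difference is cosmetic: you work with $e_{h-n}(z_1^2,\ldots,z_h^2)$, whereas the paper writes $p(t)=p(0)\prod_j(1-t/r_j)$ and works with $e_n(r_1^{-2},\ldots,r_h^{-2})$. These are equivalent because $e_{h-n}(x_1,\ldots,x_h)=x_1\cdots x_h\,e_n(x_1^{-1},\ldots,x_h^{-1})$.
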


\begin{proof}
	For $n=0$, we have
	$[t^0]p(t)p(-t)=p(0)^2\neq0$.
	We therefore assume that $1\leq n\leq h$.
	
	First, let $w_1,\ldots,w_h$ satisfy $\Rea w_j>0$, and set
	\begin{align*}
		B(z)=\prod_{j=1}^{h}(z+w_j).
	\end{align*}
	All zeros of $B(z)$ lie in the open left half-plane.
	By Theorem~\ref{thm:gausslucas}, the same holds for every
	nonconstant derivative of $B(z)$ with respect to $z$.
	Since $0$ lies outside this half-plane, we obtain
	\begin{align}\label{eq:symmetric}
		\sum_{1\leq j_1<\cdots<j_n\leq h}
		w_{j_1}\cdots w_{j_n}
		=[z^{h-n}]B(z)
		=\frac{1}{(h-n)!}
		\left.
		\frac{\mathrm d^{h-n}B(z)}{\mathrm dz^{h-n}}
		\right|_{z=0}
		\neq0.
	\end{align}
	
	Let $r_1,\ldots,r_h$ be the zeros of $p(t)$,
	counted with multiplicity.
	Since $p(0)\neq0$, all $r_j$ are nonzero, and we have
	\begin{align*}
		p(t)=p(0)\prod_{j=1}^{h}\left(1-\frac{t}{r_j}\right).
	\end{align*}
	It follows that
	\begin{align}\label{eq:pproduct}
		p(t)p(-t)
		=p(0)^2\prod_{j=1}^{h}\left(1-\frac{t^2}{r_j^2}\right).
	\end{align}
	Extracting the coefficient of $t^{2n}$ from
	\eqref{eq:pproduct} and multiplying both sides by $(-1)^n$,
	we obtain
	\begin{align}\label{eq:reciprocal}
		(-1)^n[t^{2n}]p(t)p(-t)
		=p(0)^2
		\sum_{1\leq j_1<\cdots<j_n\leq h}
		\frac{1}{r_{j_1}^2\cdots r_{j_n}^2}.
	\end{align}
	We next show that the sum on the right-hand side of
	\eqref{eq:reciprocal} is nonzero.
	By the assumption on the zeros, for $1\leq j\leq h$,
	we may write
	\begin{align*}
		r_j=|r_j|e^{\mathrm i\theta_j},
		\qquad
		\beta\leq\theta_j\leq\beta+\alpha.
	\end{align*}
	Hence
	$
		r_j^{-2}
		=|r_j|^{-2}e^{-2\mathrm i\theta_j}.
	$
	For $1\leq j\leq h$, set
	\begin{align}\label{eq:rotatedreciprocals}
		w_j
		:=e^{\mathrm i(2\beta+\alpha)}r_j^{-2}
		=|r_j|^{-2}e^{\mathrm i(2\beta+\alpha-2\theta_j)}.
	\end{align}
	Since
	$
		-\pi/2<-\alpha
		\leq2\beta+\alpha-2\theta_j
		\leq\alpha<\pi/2,
	$
	we have
	\begin{align*}
		\Rea w_j
		=|r_j|^{-2}\cos(2\beta+\alpha-2\theta_j)>0.
	\end{align*}
	Applying \eqref{eq:symmetric} to $w_j$ in
	\eqref{eq:rotatedreciprocals}, we obtain
	\begin{align}\label{eq:nonzeroreciprocal}
		\sum_{1\leq j_1<\cdots<j_n\leq h}
		\frac{1}{r_{j_1}^2\cdots r_{j_n}^2}
		=
		e^{-\mathrm i n(2\beta+\alpha)}
		\sum_{1\leq j_1<\cdots<j_n\leq h}
		w_{j_1}\cdots w_{j_n}
		\neq0.
	\end{align}
	Since $p(0)\neq0$, it follows from
	\eqref{eq:reciprocal} and \eqref{eq:nonzeroreciprocal}
	that \eqref{eq:sector} holds.
\end{proof}

Recall from \eqref{eq:laguerre} that
\begin{align*}
	Q_j(x)=L_j(-x)
	=\sum_{r=0}^{j}\binom{j}{r}\frac{x^r}{r!},
	\qquad j\geq0.
\end{align*}
We next give the following result.

\begin{lemma}\label{lem:nonzero}
	For all integers $m\geq0$ and $n\geq1$, we have
	\begin{align}\label{eq:nonzerocoefficient}
		[x^{2n}]\T_{m,n}(Q;x)
		=\frac{1}{(n!)^2(2n)!}>0.
	\end{align}
	In particular, $\T_{m,n}(Q;x)$ is not identically zero.
\end{lemma}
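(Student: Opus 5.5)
The plan is to compute the coefficient through an exponential generating function in which a factor $e^t$ cancels against $e^{-t}$. This mirrors the pairing argument used for \eqref{eq:flowproduct} and \eqref{eq:boundarypair}.

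\textbf{Step 1: a generating function for $Q_j$.} From \eqref{eq:laguerre},
\begin{align*}
F(t,x):=\sum_{j\geq0}Q_j(x)\frac{t^j}{j!}
=\sum_{r\geq0}\frac{x^r}{(r!)^2}\sum_{j\geq r}\frac{t^j}{(j-r)!}
=e^t\sum_{r\geq0}\frac{x^rt^r}{(r!)^2}.
\end{align*}
This is an entire function of $t$ for fixed $x$.

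\textbf{Step 2: shift and pair.} Differentiating $m$ times in $t$ gives
\begin{align*}
\sum_{j\geq0}Q_{m+j}(x)\frac{t^j}{j!}=\partial_t^mF(t,x).
\end{align*}
Multiplying the expansions at $t$ and $-t$ and extracting $[t^{2n}]$, exactly as in \eqref{eq:flowproduct}, yields
\begin{align*}
(-1)^n\T_{m,n}(Q;x)=[t^{2n}]\bigl(\partial_t^mF\bigr)(t,x)\,\bigl(\partial_t^mF\bigr)(-t,x).
\end{align*}

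\textbf{Step 3: expand by the Leibniz rule.} We have
\begin{align*}
\partial_t^m\bigl(e^tt^r\bigr)=e^t\sum_{k=0}^{m}\binom{m}{k}\frac{r!}{(r-k)!}t^{r-k}.
\end{align*}
Hence the coefficient of $x^{2n}$ in the product is a sum over pairs $r+s=2n$ of
\begin{align*}
\frac{1}{(r!)^2(s!)^2}\,e^{t}e^{-t}\sum_{k,l}\binom{m}{k}\binom{m}{l}\frac{r!}{(r-k)!}\frac{s!}{(s-l)!}\,t^{r-k}(-t)^{s-l}.
\end{align*}
The key point is that the exponentials cancel, $e^te^{-t}=1$, so what remains is a genuine polynomial in $t$. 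Its powers are $t^{2n-k-l}$, which never exceed $2n$. The power $t^{2n}$ therefore arises only from $k=l=0$, and this is why the answer does not depend on $m$.

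\textbf{Step 4: evaluate the sum.} The previous step gives
\begin{align*}
(-1)^n[x^{2n}]\T_{m,n}(Q;x)=\sum_{r=0}^{2n}\frac{(-1)^{2n-r}}{(r!)^2((2n-r)!)^2}
=\frac{1}{((2n)!)^2}\sum_{r=0}^{2n}(-1)^r\binom{2n}{r}^2.
\end{align*}
The alternating sum equals $(-1)^n\binom{2n}{n}$; this is the coefficient of $t^{2n}$ in $(1-t)^{2n}(1+t)^{2n}=(1-t^2)^{2n}$. Therefore
\begin{align*}
[x^{2n}]\T_{m,n}(Q;x)=\frac{\binom{2n}{n}}{((2n)!)^2}=\frac{1}{(n!)^2(2n)!}>0,
\end{align*}
which is \eqref{eq:nonzerocoefficient}. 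Nonvanishing of $\T_{m,n}(Q;x)$ follows at once.

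\textbf{Main obstacle.} The only delicate point is the bookkeeping in Step 3, namely justifying that $e^te^{-t}$ removes every contribution with $k+l>0$. I would handle it by first multiplying the two finite sums and only afterwards comparing powers of $t$. All series involved are entire in $t$ and polynomial in $x$ for each fixed power of $t$, so extracting the coefficients of $x^{2n}$ and of $t^{2n}$ commutes.
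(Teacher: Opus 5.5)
Your proof is correct. It reaches the same alternating sum $\sum_{r}(-1)^r\binom{2n}{r}^2=(-1)^n\binom{2n}{n}$ as the paper, but it gets there through the exponential generating function instead of a finite polynomial identity.

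The paper notes that $Q_j(x)$ is the image of $(1+u)^j$ under the linear map $u^r\mapsto x^r/r!$. Hence $\T_{m,n}(Q;x)$ is $(-1)^n/(2n)!$ times the two-variable image of $f(u,v)=\sum_{j}(-1)^j\binom{2n}{j}(1+u)^{m+j}(1+v)^{m+2n-j}=(1+u)^m(1+v)^m(v-u)^{2n}$. Since $(1+u)^m(1+v)^m$ has constant term $1$, the part of $f$ of total degree $2n$ is exactly $(v-u)^{2n}$, which explains why the answer does not depend on $m$.

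Your argument is the same mechanism in generating-function form:
\begin{itemize}
\item $F(t,x)$ is the image of $e^{t(1+u)}$.
\item Your Leibniz index $k$ is the power of $u$ taken from $(1+u)^m$.
\item Your observation that only $k=l=0$ reaches $t^{2n}$ in the coefficient of $x^{2n}$ says that only the constant term of $(1+u)^m(1+v)^m$ contributes to the degree-$2n$ part of $f$.
\end{itemize}

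The paper's route needs no infinite series at all. Yours has the advantage of matching the $t\leftrightarrow -t$ pairing used in \eqref{eq:flowproduct} and \eqref{eq:boundarypair}.

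The bookkeeping you list as the main obstacle is harmless. All your identities hold in $\mathbb{Q}[x][[t]]$. There, $(\partial_t^mF)(t,x)\,(\partial_t^mF)(-t,x)=P(t,x)P(-t,x)$ with $P(t,x)=e^{-t}\partial_t^mF(t,x)$. So the coefficient of $x^{2n}t^{2n}$ can be read off in either order, and no analytic argument about entire functions is needed.
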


\begin{proof}
	Define the bivariate polynomial
	\begin{align}\label{eq:Sigma}
		f(u,v)
		&:=\sum_{j=0}^{2n}(-1)^j\binom{2n}{j}
		(1+u)^{m+j}(1+v)^{m+2n-j}\\
		&=(1+u)^m(1+v)^m(v-u)^{2n},
		\notag
	\end{align}
	where the second equality follows from the binomial theorem.
	Using \eqref{eq:laguerre} in \eqref{eq:turan} and noting that
	$1/(j!(2n-j)!)=\binom{2n}{j}/(2n)!$, we obtain
	\begin{align}\label{eq:SigmaT}
		\T_{m,n}(Q;x)
		=\frac{(-1)^n}{(2n)!}
		\sum_{r,s\geq0}[u^rv^s]f(u,v)\,
		\frac{x^{r+s}}{r!s!}.
	\end{align}
	This sum is finite because $f(u,v)$ is a polynomial.
	
	Since the constant term of $(1+u)^m(1+v)^m$ is $1$,
	the terms of total degree $2n$ in \eqref{eq:Sigma}
	are precisely those of $(v-u)^{2n}$.
	Thus, for $r+s=2n$, we have
	\begin{align}\label{eq:homogeneouscoefficient}
		[u^rv^s]f(u,v)
		=[u^rv^s](v-u)^{2n}
		=(-1)^r\binom{2n}{r}.
	\end{align}
	Extracting the coefficient of $x^{2n}$ from
	\eqref{eq:SigmaT} and using \eqref{eq:homogeneouscoefficient},
	we get
	\begin{align}\label{eq:squares}
		[x^{2n}]\T_{m,n}(Q;x)
		=\frac{(-1)^n}{(2n)!}
		\sum_{r=0}^{2n}
		\frac{(-1)^r\binom{2n}{r}}{r!(2n-r)!}
		=\frac{(-1)^n}{((2n)!)^2}
		\sum_{r=0}^{2n}(-1)^r\binom{2n}{r}^2.
	\end{align}
	Applying the binomial theorem again, we have
	\begin{align}\label{eq:binomialsquares}
		\sum_{r=0}^{2n}(-1)^r\binom{2n}{r}^2
		=[z^{2n}](1-z)^{2n}(1+z)^{2n}
		=[z^{2n}](1-z^2)^{2n}
		=(-1)^n\binom{2n}{n}.
	\end{align}
	Substituting \eqref{eq:binomialsquares} into
	\eqref{eq:squares}, we obtain
	\begin{align*}
		[x^{2n}]\T_{m,n}(Q;x)
		=\frac{1}{((2n)!)^2}\binom{2n}{n}
		=\frac{1}{(n!)^2(2n)!}>0.
	\end{align*}
	This proves \eqref{eq:nonzerocoefficient}.
\end{proof}

\begin{proof}[\textbf{Proof of Theorem~\ref{thm:laguerre}}]
	Fix integers $m\geq0$ and $n\geq1$.
	For each integer $k\geq m+n$, define
	\begin{align}\label{eq:Fk}
		F_k(t,x)
		=\left(1+\frac{t}{k}\right)^k
		Q_k\left(\frac{xt}{k}\right)
	\end{align}
	and
	\begin{align}\label{eq:Hk}
		H_k(x)
		=(-1)^n[t^{2n}]
		\left[
		\frac{\partial^mF_k(t,x)}{\partial t^m}
		\left.
		\frac{\partial^mF_k(s,x)}{\partial s^m}
		\right|_{s=-t}
		\right].
	\end{align}
	Since $F_k(t,x)$ is a polynomial in $t$ and $x$,
	$H_k(x)$ is a polynomial in $x$.
	
	We first show that every $H_k(x)$ has no zeros
	in the open right half-plane $\HH$.
	Fix $x\in\HH$ and write
	$x=|x|e^{\mathrm i\theta}$, where
	$-\pi/2<\theta<\pi/2$.
	Since all zeros of $Q_k$ are negative, denote them by
	$-\xi_{1,k},\ldots,-\xi_{k,k}$, where $\xi_{j,k}>0$.
	
	As a polynomial in $t$, $F_k(t,x)$ has degree $2k$.
	The first factor $\left(1+t/k\right)^k$
	in \eqref{eq:Fk} contributes the zero $-k$
	with multiplicity $k$.
	The zeros contributed by the second factor
	$Q_k\left(xt/k\right)$ are
	\begin{align*}
		-\frac{k\xi_{j,k}}{x}
		=\frac{k\xi_{j,k}}{|x|}
		e^{\mathrm i(\pi-\theta)},
		\qquad 1\leq j\leq k.
	\end{align*}
	Therefore all zeros of $F_k(t,x)$ lie on the two rays
	with arguments $\pi$ and $\pi-\theta$.
	The angle between these rays is $|\theta|<\pi/2$,
	and both rays lie in the open left half-plane.
	Their convex hull is consequently contained in that
	half-plane and in the sector between the two rays.
	
	By Theorem~\ref{thm:gausslucas}, all zeros in $t$ of
	$\frac{\partial^mF_k(t,x)}{\partial t^m}$
	lie in the convex hull of the zeros of $F_k(t,x)$.
	Hence all zeros of the derivative lie in the open left
	half-plane and in the sector of opening angle
	$|\theta|<\pi/2$.
	In particular, we have
	\begin{align*}
		\left.
		\frac{\partial^mF_k(t,x)}{\partial t^m}
		\right|_{t=0}\neq0.
	\end{align*}
	Moreover, the degree of
	$\frac{\partial^mF_k(t,x)}{\partial t^m}$
	in $t$ is $2k-m\geq n$.
	Therefore, applying Lemma~\ref{lem:sector} to
	$\frac{\partial^mF_k(t,x)}{\partial t^m}$
	and using \eqref{eq:Hk}, we obtain
	\begin{align}\label{eq:Hknonzero}
		H_k(x)\neq0,\qquad x\in\HH.
	\end{align}
	
	We next show that $H_k(x)$ converges to
	$\T_{m,n}(Q;x)$ uniformly on every compact subset
	of $\mathbb C$.
	For each integer $j\geq0$, define
	\begin{align}\label{eq:Ajk}
		A_{j,k}(x)=j![t^j]F_k(t,x).
	\end{align}
	Expanding the two factors in \eqref{eq:Fk}
	and using \eqref{eq:laguerre}, we get
	\begin{align*}
		A_{j,k}(x)
		=j!\sum_{r=0}^{j}
		\frac{\binom{k}{j-r}\binom{k}{r}}{k^j r!}\,x^r.
	\end{align*}
	For each fixed $j$ and $0\leq r\leq j$, we have
	\begin{align}\label{eq:coefficientlimit}
		\lim_{k\to\infty}
		\frac{j!\binom{k}{j-r}\binom{k}{r}}{k^j r!}
		=\frac{j!}{(j-r)!(r!)^2}
		=\frac{\binom jr}{r!}.
	\end{align}
	Comparing \eqref{eq:coefficientlimit} with
	\eqref{eq:laguerre}, we see that each coefficient
	of $A_{j,k}(x)$ converges to the corresponding coefficient
	of $Q_j(x)$.
	For each fixed $j$, the degrees of $A_{j,k}(x)$ and $Q_j(x)$
	are at most $j$.
	Therefore, using \eqref{eq:coefficientlimit}, we obtain
	\begin{align}\label{eq:Ajklimit}
		\lim_{k\to\infty}A_{j,k}(x)=Q_j(x),
	\end{align}
	with convergence uniform on every compact subset
	of $\mathbb C$.
	
	By \eqref{eq:Ajk}, we have
	\begin{align*}
		F_k(t,x)=\sum_{j=0}^{2k}A_{j,k}(x)\frac{t^j}{j!}.
	\end{align*}
	Differentiating $m$ times with respect to $t$, we obtain
	\begin{align}\label{eq:Fkderivativeseries}
		\frac{\partial^mF_k(t,x)}{\partial t^m}
		=\sum_{j=0}^{2k-m}A_{m+j,k}(x)\frac{t^j}{j!}.
	\end{align}
	Applying \eqref{eq:Fkderivativeseries} to \eqref{eq:Hk}
	and extracting the coefficient of $t^{2n}$, we obtain 
	\begin{align}\label{eq:Hlimit}
		H_k(x)
		=\sum_{j=0}^{2n}
		\frac{(-1)^{n+j}}{j!(2n-j)!}
		A_{m+j,k}(x)A_{m+2n-j,k}(x).
	\end{align}
	Taking the limit as $k\to\infty$ in \eqref{eq:Hlimit}
	and using \eqref{eq:Ajklimit} and \eqref{eq:turan},
	we obtain
	\begin{align}\label{eq:Hkconvergence}
		\lim_{k\to\infty}H_k(x)=\T_{m,n}(Q;x).
	\end{align}
	The convergence is uniform on every compact subset
	of $\mathbb C$.
	
	Finally, Lemma~\ref{lem:nonzero} shows that
	$\T_{m,n}(Q;x)$ is not identically zero.
	Therefore Theorem~\ref{thm:hurwitz}, together with
	\eqref{eq:Hknonzero} and \eqref{eq:Hkconvergence}, gives
	\begin{align*}
		\T_{m,n}(Q;x)\neq0,
		\qquad \Rea x>0.
	\end{align*}
	This completes the proof.
\end{proof}

\section*{Declaration of Generative AI}
During the preparation of this manuscript, the author used
generative AI tools to explore possible approaches, check
technical details, and improve the exposition.
The author verified all arguments and results and takes
full responsibility for the manuscript.

\end{document}